\newtheorem*{theorem*}{Theorem}
\newtheorem{theorem}{Theorem}[section]
\newtheorem{corollary}{Corollary}[section]
\newtheorem{definition}{Definition}[section]
\newtheorem{lemma}{Lemma}[section]
\numberwithin{equation}{section}
\newcommand{\actson}{\curvearrowright}
\begin{document}

\title{Uniform mixing and completely positive sofic entropy}

\author{Tim Austin and Peter Burton}

\date{\today}

\maketitle

\begin{abstract} Let $G$ be a countable discrete sofic group. We define a concept of uniform mixing for measure-preserving $G$-actions and show that it implies completely positive sofic entropy. When $G$ contains an element of infinite order, we use this to produce an uncountable family of pairwise nonisomorphic $G$-actions with completely positive sofic entropy. None of our examples is a factor of a Bernoulli shift. \end{abstract}

\section{Introduction}

Let $G$ be a countable discrete sofic group, $(X,\mu)$ a standard probability space and $T: G \curvearrowright X$ a measurable $G$-action preserving $\mu$. In \cite{Bow10}, Lewis Bowen defined the sofic entropy of $(X,\mu,T)$ relative to a sofic approximation under the hypothesis that the action admits a finite generating partition. The definition was extended to general $(X,\mu,T)$ by Kerr and Li in \cite{KeLi11} and Kerr gave a more elementary approach in \cite{Ke13}. In \cite{Bow12a} Bowen showed that when $G$ is amenable, sofic entropy relative to any sofic approximation agrees with the standard Kolmogorov-Sinai entropy. Despite some notable successes such as the proof in \cite{Bow10} that Bernoulli shifts with distinct base-entropies are nonisomorphic, many aspects of the theory of sofic entropy are still relatively undeveloped.\\
\\
Rather than work with abstract measure-preserving $G$-actions, we will use the formalism of $G$-processes. If $G$ is a countable group and $A$ is a standard Borel space, we will endow $A^G$ with the right-shift action given by $(g \cdot a)(h) = a(hg)$ for $g,h \in G$ and $a \in A^G$. A $G$-process over $A$ is a Borel probability measure $\mu$ on $A^G$ which is invariant under this action. Any measure-preserving action of $G$ on a standard probability space is measure-theoretically isomorphic to a $G$-process over some standard Borel space $A$. We will assume the state space $A$ is finite, which corresponds to the case of measure-preserving actions which admit a finite generating partition. Note that by results of Seward from \cite{Sew14a} and \cite{Sew14b}, the last condition is equivalent to an action admitting a countable generating partition with finite Shannon entropy. \\
\\
In \cite{Aus15}, the first author introduced a modified invariant called model-measure sofic entropy which is a lower bound for Bowen's sofic entropy. Let $\Sigma = (\sigma_n:G \to \mathrm{Sym}(V_n))$ be a sofic approximation to $G$. Model-measure sofic entropy is constructed in terms of sequences $(\mu_n)_{n=1}^\infty$ where $\mu_n$ is a probability measure on $A^{V_n}$. If these measures replicate the process $(A^G,\mu)$ in an appropriate sense then we say that $(\mu_n)_{n=1}^\infty$ locally and empirically converges to $\mu$. We refer the reader to \cite{Aus15} for the precise definitions. We have substituted the phrase `local and empirical convergence' for the phrase `quenched convergence' which appeared in \cite{Aus15}. This has been done to avoid confusion with an alternative use of the word `quenched' in the physics literature. A process is said to have completely positive model-measure sofic entropy if every nontrivial factor has positive model-measure sofic entropy. The goal of this paper is the to prove the following theorem, which generalizes the main theorem of \cite{DGRS08}.

\begin{theorem} \label{thm1.0} Let $G$ be a countable sofic group containing an element of infinite order. Then there exists an uncountable family of pairwise nonisomorphic $G$-processes each of which has completely positive model-measure sofic entropy (and hence completely positive sofic entropy) with respect to any sofic approximation to $G$. None of these processes is a factor of a Bernoulli shift. \end{theorem}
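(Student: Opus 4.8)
The plan is to reduce Theorem \ref{thm1.0} to a construction over the copy of $\mathbb{Z}$ supplied by the hypothesis, together with the transfer principle relating uniform mixing to completely positive model-measure sofic entropy announced above. Fix an element $t\in G$ of infinite order and set $H=\langle t\rangle\cong\mathbb{Z}$. Starting from an uncountable family $(\mu_\alpha)$ of $H$-processes over a finite alphabet $A$, I would form the coinduced $G$-processes $\nu_\alpha=\mathrm{CoInd}_H^G\,\mu_\alpha$: fixing a transversal for $G/H$, this is the product of copies of $\mu_\alpha$ indexed by the left cosets of $H$, with $G$ acting by the twisted right shift, permuting these coordinate blocks and acting within each by the $\mathbb{Z}$-dynamics. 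Since the per-site alphabet remains $A$, each $\nu_\alpha$ is again a finite-alphabet $G$-process. The theorem follows once the $\nu_\alpha$ are shown to (i) be uniformly mixing, (ii) be pairwise nonisomorphic, and (iii) fail to be factors of Bernoulli shifts.

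For (i) I would first arrange each $\mu_\alpha$ to be a uniformly mixing $\mathbb{Z}$-process that is not a factor of a Bernoulli shift -- equivalently, by Ornstein theory, a non-Bernoulli $K$-system -- adapting the construction of \cite{DGRS08} so that the mixing estimates it provides are uniform in the sense our definition demands. I would then check that uniform mixing passes through coinduction: because $\nu_\alpha$ is an independent product over the cosets of $H$, and the $G$-action only permutes these blocks while acting within each by the $\mathbb{Z}$-dynamics, decoupling estimates for $\mu_\alpha$ lift to decoupling estimates for $\nu_\alpha$ across arbitrary $G$-translates. Feeding (i) into the implication that uniform mixing yields completely positive model-measure sofic entropy gives the entropy conclusion for every sofic approximation, and the fact that model-measure sofic entropy is a lower bound for Bowen's sofic entropy (applied to each factor) upgrades this to completely positive sofic entropy.

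Claim (iii) is the cleanest. If some $\nu_\alpha$ were a factor of a Bernoulli $G$-shift $\beta$, then restricting both actions to $H$ exhibits $\nu_\alpha|_H$ as a factor of $\beta|_H$. The restriction of a Bernoulli $G$-shift to an infinite subgroup is again a Bernoulli $\mathbb{Z}$-shift, so by Ornstein's theorem $\nu_\alpha|_H$ would itself be Bernoulli; but the coordinate block indexed by the identity coset realizes $\mu_\alpha$ as a factor of $\nu_\alpha|_H$, forcing $\mu_\alpha$ to be Bernoulli and contradicting its choice. This argument is uniform in $\alpha$.

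I expect the main obstacle to lie in (ii), pairwise nonisomorphism, alongside the verification of uniform mixing for the models of (i). The difficulty with nonisomorphism is that the natural invariant -- the isomorphism class of $\nu_\alpha|_H$, which is the self-product $\mu_\alpha^{\otimes (G/H)}$ -- collapses the standard invariants: entropy becomes infinite and the point spectrum is trivial by weak mixing. The plan is therefore to equip the family $(\mu_\alpha)$ with an isomorphism invariant that is stable under countable self-products yet still separates the parameters -- for instance a suitably normalized slow-entropy or mixing-rate quantity read off from the construction -- or, alternatively, to distinguish the $\nu_\alpha$ directly at the level of $G$ by arranging their model-measure sofic entropies to take distinct values. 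Producing such an invariant, and confirming that the construction can be tuned to realize it while remaining uniformly mixing and non-Bernoulli, is where the substance of the argument will be concentrated.
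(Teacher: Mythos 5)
Your overall reduction is the same as the paper's: coinduce an uncountable family of uniformly mixing, non-Bernoulli $\mathbb{Z}$-processes from $H=\langle t\rangle$ up to $G$, invoke the transfer machinery (Theorems \ref{thm2} and \ref{thm1}) for completely positive model-measure sofic entropy over every sofic approximation, and rule out Bernoulli factors by restricting to $H$ and applying Ornstein theory exactly as you do in (iii) --- that last argument is essentially verbatim the paper's. One framing correction to (i): you do not need, and the paper does not establish, that the coinduced $G$-process is itself ``uniformly mixing'' (no such intrinsic notion is defined for nonamenable $G$ here). Theorem \ref{thm2} takes uniform mixing of the $\mathbb{Z}$-process as input and directly outputs a uniformly model-mixing \emph{sequence of model measures} locally and empirically converging to $\mathrm{CInd}_H^G(\nu)$; the implication to completely positive entropy (Theorem \ref{thm1}) lives at the level of these model measures, not of the process. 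Also no ``adaptation'' of the base construction is needed: the family $\{(4^{\mathbb{Z}},\nu_\omega):\omega\in 2^{\mathbb{N}}\}$ of \cite{DGRS08} consists of $K$-automorphisms, hence is uniformly mixing in the required sense by the Rudolph--Weiss equivalence of completely positive entropy and uniform mixing.

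The genuine gap is (ii), which you correctly identify as the crux but leave unresolved. The paper closes it by citing existing machinery from \cite{DGRS08}: the restriction of the coinduced action to $H$ is a \emph{permuted power} of the base process in the sense of Definition 6.5 there --- note this is not the plain self-product $\mu_\alpha^{\otimes(G/H)}$ you describe, since $H$ permutes the cosets of $H$ in $G$ and acts within orbits with twists --- and Proposition 6.6 of \cite{DGRS08} states that permuted powers of $\nu_\omega$ and $\nu_{\omega'}$ are nonisomorphic for $\omega\neq\omega'$. That family was engineered precisely so that its distinguishing invariant survives countable permuted self-products, which is exactly the collapse-of-invariants problem you diagnose. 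Your proposed substitutes are speculative and problematic: a slow-entropy or mixing-rate invariant stable under permuted powers is essentially what Proposition 6.6 encapsulates and would need to be constructed from scratch, while arranging distinct (model-measure) sofic entropy values for the coinduced actions is not something you indicate how to compute, and it would have to be done compatibly with uniform mixing of the base. So the proposal is incomplete at precisely the point where the paper leans on the quoted nonisomorphism result.
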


In order to prove Theorem \ref{thm1.0} we introduce a concept of uniform mixing for sequences of model-measures. This uniform model-mixing will be defined formally in Section \ref{secmetric}. It implies completely positive model-measure sofic entropy.

\begin{theorem} \label{thm1} Let $G$ be a countable sofic group and let $(A^G,\mu)$ be a $G$-process with finite state space $A$. Suppose that for some sofic approximation $\Sigma$ to $G$, there is a uniformly model-mixing sequence $(\mu_n)_{n=1}^\infty$ which locally and empirically converges to $\mu$ over $\Sigma$. Then $(A^G,\mu)$ has completely positive lower model-measure sofic entropy with respect to $\Sigma$. \end{theorem}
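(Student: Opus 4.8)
The plan is to show that every nontrivial factor of $(A^G,\mu)$ can be equipped with an explicit locally and empirically convergent sequence of model-measures whose covering numbers grow exponentially, and that uniform model-mixing is precisely what forces this exponential growth. First I would reduce to finite, two-valued data. If $\mathcal{F}$ is a nontrivial factor $\sigma$-algebra, choose a set $B\in\mathcal{F}$ with $0<\mu(B)<1$. The map $x\mapsto(1_B(g\cdot x))_{g\in G}$ realizes a factor $(\{0,1\}^G,\nu)$ whose single-site marginal is the nondegenerate Bernoulli distribution of parameter $\mu(B)$, and hence has strictly positive Shannon entropy $H>0$. Since entropy is monotone under passage to a subfactor, it suffices to show that this two-valued factor has positive lower model-measure sofic entropy. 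Finally, since $1_B$ is $\mu$-measurable and $A$ is finite, I can approximate it by a function $\pi_0:A^F\to\{0,1\}$ depending only on finitely many coordinates $F\subseteq G$, with error in $\mu$-probability as small as desired.

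I would then transfer this local observable to the models. Using the sofic approximation $\sigma_n$ I define $\pi_n:A^{V_n}\to\{0,1\}^{V_n}$ by reading off $\pi_0$ in the $\sigma_n(F)$-patch around each vertex, and set $\nu_n:=(\pi_n)_*\mu_n$. A routine argument shows that applying a fixed local map commutes with local and empirical convergence up to the approximation error of $\pi_0$, so $(\nu_n)_{n=1}^\infty$ locally and empirically converges to $\nu$ over $\Sigma$. Thus $(\nu_n)_{n=1}^\infty$ is an admissible sequence for computing the lower model-measure sofic entropy of $\nu$, and it remains only to bound its covering numbers from below.

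The heart of the argument, and the step I expect to be the main obstacle, is this entropy lower bound. I would first use soficity to extract a subset $W_n\subseteq V_n$ of density bounded below (by roughly $1/|FF^{-1}|$) whose $F$-patches under $\sigma_n$ are pairwise disjoint. Because $\pi_n$ reads only the $F$-patch at each vertex, the coordinates of $\nu_n$ indexed by $W_n$ are functions of disjoint blocks of $A$-coordinates, so uniform model-mixing of $(\mu_n)_{n=1}^\infty$ forces the joint law under $\nu_n$ of the coordinates in $W_n$ to be close, uniformly, to the product of the single-site marginals of $\nu$. A product of $|W_n|$ nearly identical marginals of entropy close to $H$ cannot be covered by fewer than $\exp\bigl(|W_n|(H-\epsilon)\bigr)$ Hamming balls of small radius, and this lower bound survives the projection from all of $V_n$ onto the positive-density set $W_n$. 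Consequently $\tfrac{1}{|V_n|}\log N_\epsilon(\nu_n)$ stays bounded below by a positive constant for all large $n$, which yields positive lower model-measure sofic entropy for $\nu$ and hence complete positivity.

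The difficulties to be handled with care are twofold. First, one must make precise the sense in which uniform model-mixing yields near-independence across the entire separated set $W_n$ simultaneously, rather than merely pairwise, since the counting bound genuinely requires closeness to a product measure on all of $W_n$ at once; this is exactly the uniformity built into the definition in Section \ref{secmetric}. Second, one must juggle the three independent tolerances — the local approximation of $1_B$ by $\pi_0$, the defect in local and empirical convergence, and the mixing error — and send them to zero in the correct order so that none of them erodes the positive exponential rate coming from $H$.
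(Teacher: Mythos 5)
Your overall architecture matches the paper's (push a local observable into the models, use separated sets plus uniform model-mixing for an entropy bound, convert entropy to covering numbers via an estimate like Lemma \ref{lem1}), but the heart of your argument rests on a misreading of Definition \ref{def1}, and that is a genuine gap. Uniform model-mixing provides only the Shannon entropy lower bound $\mathrm{H}\bigl(\pi_{\sigma_n^F(S)*}\mu_n\bigr) \geq |S|\cdot(\mathrm{H}(\mu_F)-\epsilon)$ for $r$-separated $S$; it does not assert that the joint law of the patches, let alone of the pushed-forward coordinates, is close to a product measure in any metric, so your Hamming-ball count for a ``nearly product'' measure has no foundation. Worse, even granting the entropy bound on the $A^F$-patches, entropy can collapse under a coordinatewise pushforward: a lower bound on $\mathrm{H}(\pi_{\sigma_n^F(S)*}\mu_n)$ does not by itself lower-bound $\mathrm{H}(\pi_{S*}\phi^{\sigma_n}_*\mu_n)$. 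Bridging exactly this is the technical crux of the paper, its Lemma \ref{lem4}: a chain-rule argument showing the per-site entropy \emph{deficit} of the coarse observable is at most that of the full patches plus $o(1)$, where the $o(1)$ comes from local and empirical convergence identifying $\mathrm{H}_{\mu_n}(\alpha_s|\beta_s)$ with $\mathrm{H}_{\mu_F}(\iota|\theta)$ at most sites $s$. Your intuition can be made rigorous in entropy language — the mixing bound forces the total correlation $\sum_{s\in S}\mathrm{H}(\alpha_s)-\mathrm{H}(\alpha)$ to be $O(\epsilon|S|)+o(|S|)$, and total correlation only decreases under coordinatewise maps by data processing — but that is essentially reproving Lemma \ref{lem4}, and as written the step is missing. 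A smaller slip in the same place: disjointness of $F$-patches (density $\sim 1/|FF^{-1}|$) is not the separation the definition demands; you need $r$-separation in $\rho_n$ with $r=r(F,\epsilon)$ possibly much larger, taken inside the good sets $W_n$ of Definition \ref{def1}, giving density $\approx 1/|B_\rho(1_G,r)|$ (the paper's $K$) via maximality.

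Two further gaps. First, your model measures $\nu_n=(\pi_n)_*\mu_n$ built from a \emph{fixed} local approximation $\pi_0$ locally and empirically converge to $(\pi_0)^G_*\mu$, not to the factor process $\nu$; ``convergence up to the approximation error'' is not local and empirical convergence, so $(\nu_n)$ is not an admissible sequence for the lower model-measure sofic entropy of $\nu$, and no reordering of fixed tolerances repairs this. The paper handles it with an AL approximating sequence $(\phi_m)$, Proposition $5.15$ of \cite{Aus15} (choose $m_n\to\infty$ slowly so that $(\phi_{m_n}^{\sigma_n})_*\mu_n$ genuinely converges to $\psi^G_*\mu$), and Corollary \ref{lem6}, which uses continuity of the model entropies in the Rokhlin metric to carry the entropy bound from the fixed $\phi_M$ to the varying $\phi_{m_n}$; your closing remark about sending tolerances to zero gestures at this, but the diagonalization together with the Rokhlin-continuity estimate is real content, not bookkeeping. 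Second, your reduction to a two-valued factor invokes ``entropy is monotone under passage to a subfactor,'' i.e., that positivity for the subfactor generated by $1_B$ forces positivity for the given factor. Sofic-type entropies are not monotone under factor maps (sofic entropy can even increase along them), and the implication you need is of exactly the kind the theorem is trying to establish, so it cannot be assumed. The paper sidesteps this entirely by running the argument for an arbitrary finite observable $\psi$ with $\mathrm{H}_\mu(\psi)>0$, which is no harder than the two-valued case; you should do the same.
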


As in \cite{DGRS08}, the examples we exhibit to establish Theorem \ref{thm1.0} are produced via a coinduction method for lifting $H$-processes to $G$-processes when $H \leq G$. If $(A^H,\nu)$ is an $H$-process then we can construct a corresponding $G$-process $(A^G,\mu)$ as follows. Let $T$ be a transversal for the right cosets of $H$ in $G$. Identify $G$ as a set with $H \times T$ and thereby identify $A^G$ with $(A^H)^T$. Set $\mu = \nu^T$. We call $(A^G,\mu)$ the coinduced process and denote it by $\mathrm{CInd}^G_H(\nu)$. (See page $72$ of \cite{K} for more details on this construction.) When $H \cong \mathbb{Z}$ this procedure preserves uniform mixing.

\begin{theorem} \label{thm2} Let $G$ be a countable sofic group and let $(A^\mathbb{Z},\nu)$ be a uniformly mixing $\mathbb{Z}$-process with finite state space $A$. Let $H \leq G$ be a subgroup isomorphic to $\mathbb{Z}$ and identify $A^\mathbb{Z}$ with $A^H.$ Then for any sofic approximation $\Sigma$ to $G$, there is a uniformly model-mixing sequence of measures which locally and empirically converges to $\mathrm{CInd}^G_H(\nu)$ over $\Sigma$. \end{theorem}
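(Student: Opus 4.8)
The plan is to build the model measures $\mu_n$ directly from the cycle structure of the permutations $\sigma_n(h)$, where $h$ is a fixed generator of $H \cong \mathbb{Z}$. For each $n$ I would partition $V_n$ into the cycles of $\sigma_n(h)$ and let $V_n^{\mathrm{good}}$ be the union of those cycles whose length is at least a threshold $\ell_n$ with $\ell_n \to \infty$ slowly; since $\Sigma$ is a sofic approximation, $\sigma_n(h^j)$ is asymptotically free for each fixed $j$, so $|V_n^{\mathrm{good}}|/|V_n| \to 1$. On each good cycle $C$ of length $k$ I identify $C$ with the interval $\{0,1,\dots,k-1\}$ through the map $i \mapsto \sigma_n(h)^i v_C$ for a chosen base point $v_C$, and place on $C$ the marginal of $\nu$ on $\{0,\dots,k-1\}$ (viewing $\nu$ as a measure on $A^{\mathbb Z}$). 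I then let $\mu_n$ be the product, over all cycles, of these per-cycle measures, with an arbitrary fixed choice on the negligible bad set. Thus $\mu_n$ is by construction a product measure across distinct $\sigma_n(h)$-cycles, and its restriction to a single cycle is an interval-sample of $\nu$.

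For local and empirical convergence I would fix a finite window $F \subseteq G$ and a pattern $p \in A^F$ and estimate the empirical frequency with which $p$ occurs in a $\mu_n$-typical configuration. The $\mu_n$-expected frequency equals the average over $v \in V_n$ of the $\mu_n$-probability of seeing $p$ on $\{\sigma_n(f)v : f \in F\}$. Here I would invoke soficity: for a fraction of $v$ tending to $1$, the map $f \mapsto \sigma_n(f)v$ is injective on $F$ and faithfully represents the right-coset structure of $F$, in the sense that two elements of $F$ lie in a common right $H$-coset exactly when their images lie on a common $\sigma_n(h)$-cycle at the matching relative position, while elements in distinct cosets either land on distinct cycles or on a common cycle at separation exceeding any prescribed $L$. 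On such $v$ the value-pattern produced by $\mu_n$ is distributed as the corresponding patches of $\nu$, one patch per coset, coupled only through a single cycle's sample when two cosets collide. By the mixing of $\nu$, patches separated by at least $L$ are within $\epsilon$ of independent, reproducing the cross-coset independence built into $\mu = \mathrm{CInd}^G_H(\nu)$, while within a single coset the single cycle-sample reproduces the $\nu$-structure exactly; hence the expected frequency converges to $\mu([p])$. Concentration around the mean then follows from the product-across-cycles structure: windows based at $v,v'$ whose $F$-images meet different cycles contribute independently, such pairs have density tending to $1$, and so the variance tends to $0$. Together these give local and empirical convergence of $(\mu_n)$ to $\mu$ over $\Sigma$.

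For uniform model-mixing I would again use the two features of $\mu_n$: exact independence across cycles and within-cycle agreement with $\nu$. Given two regions of $V_n$ separated in the sense required by the definition in Section \ref{secmetric}, their intersections with any common $\sigma_n(h)$-cycle must be far apart along that cycle, since a short cyclic displacement would correspond to applying $\sigma_n(h^I)$ for bounded $I$, hence to a group element $h^I$ in a fixed finite ball, contradicting the separation; here I use that $h$ has infinite order, so $h^I$ leaves every fixed finite subset of $G$ as $|I| \to \infty$. Consequently the portions of the two regions on each shared cycle are $L$-separated with $L$ as large as we wish, and uniform mixing of $\nu$ makes their joint law within $\epsilon$ of the product, while portions on distinct cycles are exactly independent. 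Summing these contributions yields the uniform decay of correlations required for uniform model-mixing, with modulus inherited directly from that of $\nu$.

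I expect the main obstacle to be the interplay between the right-coset structure of $\mathrm{CInd}^G_H(\nu)$ and the cycle structure of $\sigma_n(h)$ in the convergence step: because several distinct cosets meeting $F$ may be represented on a single long cycle, the independence that $\mu$ demands across cosets is not produced exactly by $\mu_n$ but only recovered through mixing, and it is precisely here that \emph{uniform} (rather than mere) mixing of $\nu$ is needed, so that the approximation holds simultaneously for all the separations and positions arising as $v$ and $n$ vary. Carefully bookkeeping these collisions, and matching the resulting estimate to the formal metric definition of uniform model-mixing, is the delicate part; the remaining concentration and separation arguments are routine consequences of soficity and the product structure.
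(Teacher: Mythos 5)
Your overall blueprint---sampling $\nu$ along the cycles of $\sigma_n^h$, exact independence across distinct blocks, and uniform mixing of $\nu$ to handle what falls on a common block---is the right one, and your derivation of cyclic separation from $\rho_n$-separation (a displacement by $(\sigma_n^h)^p$ costs at most $|p|\,\rho(h,1_G)$ in $\rho_n$) matches the paper's choice of $r$ from $r_0$. But there is a genuine gap in your concentration step, and it stems from your one real design difference with the paper: you place a $\nu$-sample on each \emph{whole} cycle, whereas the paper first chops every cycle into paths of length $l_n$, with $l_n \to \infty$ slowly, and puts i.i.d.\ copies of $\nu_{l_n}$ on the paths. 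Nothing in soficity prevents $\sigma_n^h$ from being a single cycle through all of $V_n$ (or from having cycles of length comparable to $|V_n|$); in that case every window meets the same cycle, your ``pairs of base points whose $F$-images meet different cycles have density tending to $1$'' is false, and the product-across-cycles structure yields no variance decay whatsoever. Your variance argument works only when $\max_C |C| = o(|V_n|)$, which is exactly what the paper's truncation to length-$l_n$ paths manufactures and what your construction does not guarantee. (The paper in fact avoids any variance computation: since $\mathrm{CInd}_H^G(\nu)$ is ergodic, Corollary $5.6$ of \cite{Aus15} upgrades local weak$^*$ convergence to local and empirical convergence for free; you could repair your proof either by citing that, or by adopting the path-cutting. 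Note also that the truncation simplifies the weak$^*$ step itself: with the paper's condition (b) on the growth of $l_n$, distinct right cosets of $H$ meeting $F$ land in \emph{distinct, exactly independent} paths for $(1-o(1))|V_n|$ base points, so the window law equals $\prod_k \nu(B_k)$ exactly, whereas your whole-cycle design forces you to invoke mixing of all orders of $\nu$ to approximate the cross-coset independence even at this stage.)

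A second, smaller problem is your reading of the target in the last step: uniform model-mixing (Definition \ref{def1}) is an \emph{entropy} lower bound, $\mathrm{H}(\pi_{\sigma_n^F(S)*}\mu_n) \geq |S|(\mathrm{H}(\mu_F)-\epsilon)$ for $r$-separated $S$, not ``uniform decay of correlations,'' and your intermediate claim that uniform mixing makes the joint law of the patches ``within $\epsilon$ of the product'' is not justified---and cannot be, uniformly over the $|S| \gtrsim |V_n|$ patches that actually occur---in any norm strong enough to control entropy additively. The correct move, which is how the paper uses the hypothesis in Lemma \ref{lem2.2}, is to apply uniform mixing of $\nu$ in its native entropic form, as in (\ref{eq4.0}): for intervals of length $|I|$ pairwise $r_0$-separated in $\mathbb{Z}$, $\mathrm{H}(\nu_K) \geq q(\mathrm{H}(\nu_I)-\epsilon/m)$, then sum over paths using exact independence and the count $\sum_P |S_P| = m|S|$. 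Since this entropy inequality is literally the definition of uniform mixing, the repair here is routine; the substantive flaw remaining is the concentration issue above.
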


We remark that it is easy to see that if $(A^G,\mu)$ is a Bernoulli shift (that is to say, $\mu$ is a product measure), then there is a uniformly model-mixing sequence which locally and empirically converges to $\mu$. Indeed, if $\mu = \eta^G$ for a measure $\eta$ on $A$ then the measures $\eta^{V_n}$ on $A^{V_n}$ are uniformly model-mixing and locally and empirically converge to $\mu$. Thus Theorem \ref{thm1} shows that Bernoulli shifts with finite state space have completely positive sofic entropy, giving another proof of this case of the main theorem from \cite{Ke15}. We believe that completely positive sofic entropy for general Bernoulli shifts can be deduced along the same lines, requiring only a few additional estimates, but do not pursue the details here.

\subsection{Acknowledgements}

The first author's research was partially supported by the Simons Collaboration on Algorithms and Geometry. The second author's research was partially supported by NSF grants DMS-0968710 and DMS-1464475.

\section{Preliminaries}

\subsection{Notation}

The notation we use closely follows that in \cite{Aus15}; we refer the reader to that reference for further discussion. Let $A$ be a finite set. For any pair of sets $W \subseteq S$ we let $\pi_W: A^S \to A^W$ be projection onto the $W$-coordinates (thus our notation leaves the larger set $S$ implicit). Let $G$ be a countable group and let $(A^G,\mu)$ be a $G$-process. For $F \subseteq G$ we will write $\mu_F = \pi_{F*} \mu \in \mathrm{Prob}(A^F)$ for the $F$-marginal of $\mu$.\\
\\
Let $B$ be another finite set and let $\phi:A^G \to B$ be a measurable function. If $F \subseteq G$ we will say that $\phi$ is $F$-local if it factors through $\pi_F$. We will say $\phi$ is local if it is $F$-local for some finite $F$. Let $\phi^G: A^G \to B^G$ be given by $\phi^G(a)(g) = \phi(g \cdot a)$ and note that $\phi^G$ is equivariant between the right-shift on $A^G$ and the right-shift on $B^G$.\\
\\
Let $V$ be a finite set and let $\sigma$ be a map from $G$ to $\mathrm{Sym}(V)$. For $g \in G$ and $v \in V$ we write $\sigma^g \cdot v$ instead of $\sigma(g)(v)$. For $F \subseteq G$ and $S \subseteq V$ we define \[ \sigma^F(S) = \{\sigma^g \cdot s: g \in F, s \in S\}. \]  For $v \in V$ we write $\sigma^F(v)$ for $\sigma^F(\{v\})$. We write $\Pi_{v,F}^\sigma$ for the map from $A^V$ to $A^F$ given by $\Pi^\sigma_{v,F}(\overline{a})(g) = \overline{a}(\sigma^g \cdot v)$ for $\overline{a} \in A^V$ and $g \in F$. We write $\Pi_v^\sigma$ for $\Pi_{v,G}^\sigma$. With $\phi: A^G \to B$ as before, we write $\phi^\sigma$ for the map from $A^V$ to $B^V$ given by $\phi^\sigma(\overline{a})(v) = \phi \bigl(\Pi_v^\sigma(\overline{a}) \bigr)$.\\
\\
If $D$ is a finite set and $\eta$ is a probability measure on $D$ then $\mathrm{H}(\eta)$ denotes the Shannon entropy of $\eta$, and for $\epsilon > 0$ we define \[ \mathrm{cov}_\epsilon(\eta) = \min \bigl \{|F|: F \subseteq D \mbox{ is such that } \eta(F) > 1- \epsilon \bigr \}. \]
If $\phi:D\to E$ is a map to another finite set then we may write $\mathrm{H}_\mu(\phi)$ in place of $\mathrm{H}(\phi_\ast\mu)$.  For $p \in [0,1]$ we let $\mathrm{H}(p)= -p\log p - (1-p)\log(1-p)$.\\
\\
We use the $o(\cdot)$ and $\lesssim$ asymptotic notations with respect to the limit $n \to \infty$. Given two functions $f$ and $g$ on $\mathbb{N}$, the notation $f \lesssim g$ means that there is a positive constant $C$ such that $f(n) \leq Cg(n)$ for all $n$.

\subsection{An information theoretic estimate}

\begin{lemma} \label{lem1} Let $A$ be a finite set and let $(V_n)_{n=1}^{\infty}$ be a sequence of finite sets such that $|V_n|$ increases to infinity. Let $\mu_n$ be a probability measure on $A^{V_n}$. We have \[ \liminf_{n \to \infty} \frac{\mathrm{H}(\mu_n)}{|V_n|}  \leq \sup_{\epsilon >0} \hspace{2 pt} \liminf_{n \to \infty} \frac{1}{|V_n|} \log \mathrm{cov}_\epsilon (\mu_n). \] \end{lemma}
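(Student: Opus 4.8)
The plan is to bound $\mathrm{H}(\mu_n)$ directly in terms of the size of an optimal $\epsilon$-cover, using the elementary grouping identity for Shannon entropy over a two-set partition, and then to pass to the limit carefully. The whole argument is a one-shot entropy-versus-covering estimate, with the only delicacy being the order in which the limits in $n$ and $\epsilon$ are taken.

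First I would fix $\epsilon > 0$. For each $n$ choose $F_n \subseteq A^{V_n}$ realizing the covering number, so that $|F_n| = \mathrm{cov}_\epsilon(\mu_n)$ and $\mu_n(F_n) = 1 - \delta_n$ with $\delta_n < \epsilon$. Partitioning $A^{V_n}$ into $F_n$ and its complement and applying the grouping identity for entropy gives
\[ \mathrm{H}(\mu_n) = \mathrm{H}(\delta_n) + (1-\delta_n)\,\mathrm{H}(\mu_n|_{F_n}) + \delta_n\,\mathrm{H}(\mu_n|_{A^{V_n}\setminus F_n}), \]
where $\mu_n|_{S}$ denotes the normalized restriction of $\mu_n$ to $S$, and I have used that the entropy of the two block-probabilities $(1-\delta_n,\delta_n)$ is exactly $\mathrm{H}(\delta_n)$ in the notation of the excerpt. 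I would then bound each term crudely: $\mathrm{H}(\delta_n) \leq \log 2$; the conditional entropy on $F_n$ is at most $\log|F_n| = \log\mathrm{cov}_\epsilon(\mu_n)$; and the conditional entropy on the complement is at most $\log|A^{V_n}| = |V_n|\log|A|$. Since $\delta_n < \epsilon$ and $1 - \delta_n \leq 1$, this yields
\[ \mathrm{H}(\mu_n) \leq \log 2 + \log\mathrm{cov}_\epsilon(\mu_n) + \epsilon\,|V_n|\log|A|. \]

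Dividing through by $|V_n|$ and taking $\liminf_{n\to\infty}$, the term $\frac{\log 2}{|V_n|}$ vanishes because $|V_n|\to\infty$, and since adding a null sequence and a constant does not change the liminf I obtain
\[ \liminf_{n\to\infty}\frac{\mathrm{H}(\mu_n)}{|V_n|} \leq \liminf_{n\to\infty}\frac{1}{|V_n|}\log\mathrm{cov}_\epsilon(\mu_n) + \epsilon\log|A|. \]
The first term on the right is at most $\sup_{\epsilon'>0}\liminf_{n\to\infty}\frac{1}{|V_n|}\log\mathrm{cov}_{\epsilon'}(\mu_n)$, so, letting $\epsilon\to 0$ (the left-hand side being independent of $\epsilon$), the correction $\epsilon\log|A|$ disappears and the claimed inequality follows.

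I do not anticipate a genuine obstacle. The one point requiring care is the order of the limits: I must fix $\epsilon$ before sending $n\to\infty$, exploit that the $\log 2/|V_n|$ correction is a null sequence so that it can be dropped inside the liminf, and only afterwards send $\epsilon\to 0$. It is also worth recording that $\mathrm{cov}_\epsilon(\mu_n)$ is nonincreasing in $\epsilon$, so the $\sup_{\epsilon>0}$ on the right is in fact the limit as $\epsilon\to 0^+$; this monotonicity is what guarantees that the supremum dominates each individual $\epsilon$-term used in the bound above.
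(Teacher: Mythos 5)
Your proposal is correct and follows essentially the same route as the paper's proof: condition on the two-set partition into an optimal $\epsilon$-covering set and its complement, bound the conditional entropies by $\log \mathrm{cov}_\epsilon(\mu_n)$ and $|V_n|\log|A|$ respectively, divide by $|V_n|$, take $\liminf_{n\to\infty}$, and only then send $\epsilon \to 0$. The only cosmetic difference is that you bound the block-entropy term by $\log 2$ rather than by $\mathrm{H}(\epsilon)$ as the paper does; both are null after dividing by $|V_n|$, so nothing changes.
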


\begin{proof} Let $\mu$ be a probability measure on a finite set $F$ and let $E \subseteq F$. By conditioning on the partition $\{E,F\setminus E\}$ and then recalling that entropy is maximized by uniform distributions we obtain \begin{align} \mathrm{H}(\mu) & = \mu(E)\cdot \mathrm{H}(\mu(\cdot\,|\,E)) + \mu(F\setminus E)\cdot \mathrm{H}(\mu(\cdot\,|\,F\setminus E)) + \mathrm{H}(\mu(E))\nonumber\\ & \leq \mu(E) \cdot \log(|E|) + (1 - \mu(E)) \cdot \log(|F \setminus E|) + \mathrm{H}(\mu(E)). \label{eq2} \end{align}
 
Now let $\mu_n$ and $V_n$ be as in the statement of the lemma. Let $\epsilon > 0$ and let $S_n \subseteq A^{V_n}$ be a sequence of sets with $\mu_n(S_n) > 1 - \epsilon$ and $|S_n| = \mathrm{cov}_\epsilon(\mu_n)$. By applying (\ref{eq2}) with $F = A^{V_n}$ and $E = S_n$ we have \begin{align} \liminf_{n \to \infty} \frac{\mathrm{H}(\mu_n)}{|V_n|} & \leq \liminf_{n \to \infty} \frac{1}{|V_n|} \bigl( \mu(S_n) \cdot \log (|S_n|) + (1 - \mu (S_n)) \cdot \log ( \vert A^{V_n} \setminus S_n \vert) + \mathrm{H}(\mu(S_n)) \bigr) \nonumber \\ & \leq \liminf_{n \to \infty} \frac{1}{|V_n|} \bigl( \log ( |S_n|)  + \epsilon \cdot \log(\vert A^{V_n} \vert)  + \mathrm{H}(\epsilon) \bigr) \nonumber \\ &\leq \left( \liminf_{n \to \infty} \frac{1}{|V_n|} \log \mathrm{cov}_\epsilon \bigl(\mu_n)  \right) + \epsilon \cdot \log(|A|). \nonumber  \end{align} 

Now let $\epsilon$ tend to zero to obtain the lemma. \end{proof}

\section{Metrics on sofic approximations and uniform model-mixing} \label{secmetric}

Let us fix a proper right-invariant metric $\rho$ on $G$: for instance, if $G$ is finitely generated then $\rho$ can be a word metric, and more generally we may let $w:G\to [0,\infty)$ be any proper weight function and define $\rho$ to be the resulting weighted word metric. Again let $V$ be a finite set and let $\sigma$ be a map from $G$ to $\mathrm{Sym}(V)$. Let $H_\sigma$ be the graph on $V$ with an edge from $v$ to $w$ if and only if $\sigma^g \cdot v = w$ or $\sigma^g \cdot w = v$ for some $g \in G$. Define a weight function $W$ on the edges of $H_\sigma$ by setting \[ W(v,w) = \min \bigl \{ \rho(g,1_G): \sigma^g \cdot v = w \mbox{ or } \sigma^g \cdot w = v \bigr \}. \] If $v$ and $w$ are in the same connected component of $H_\sigma$ let $\rho_\sigma$ be the $W$-weighted graph distance between $v$ and $w$, that is \[   \rho_\sigma(v,w) = \min \left \{ \sum_{i=0}^{k-1} W(p_i,p_{i+1}): (v=p_0,p_1,\ldots,p_{k-1},p_k= w) \mbox{ is an } H_\sigma \mbox{-path from }v \mbox{ to }w \right \}. \] Having defined $\rho_\sigma$ on the connected components of $H_\sigma$, choose some number $M$ much larger than the $\rho_\sigma$-distance between any two points in the same connected component. Set $\rho_\sigma(v,w) = M$ for any pair $v,w$ of vertices in distinct connected components of $H_\sigma$. Note that if $(\sigma_n:G \to \mathrm{Sym}(V_n))$ is a sofic approximation to $G$ then for any fixed $r < \infty$ once $n$ is large enough the map $g \mapsto \sigma_n^g \cdot v$ restricts to an isometry from $B_\rho(1_G,r)$ to $B_{\rho_{\sigma_n}}(v,r)$ for most $v \in V_n$.\\
\\
In the sequel the sofic approximation will be fixed, and we will abbreviate $\rho_{\sigma_n}$ to $\rho_n$. We can now state the main definition of this paper.

\begin{definition} \label{def1} Let $(V_n)_{n=1}^\infty$ be a sequence of finite sets with $|V_n| \to \infty$ and for each $n$ let $\sigma_n$ be a map from $G$ to $\mathrm{Sym}(V_n)$. Let $A$ be a finite set. For each $n \in \mathbb{N}$ let $\mu_n$ be a probability measure on $A^{V_n}$. We say the sequence $(\mu_n)_{n=1}^\infty$ is \textbf{uniformly model-mixing} if the following holds. For every finite $F \subseteq G$ and every $\epsilon > 0$ there is some $r < \infty$ and a sequence of subsets $W_n \subseteq V_n$ such that \[ |W_n| = (1 - o(1)) |V_n| \] and if $S \subseteq W_n$ is $r$-separated according the metric $\rho_n$ then \[ \mathrm{H} \left( \pi_{\sigma^F_n(S) *} \mu_n \right) \geq |S| \cdot ( \mathrm{H}(\mu_F) - \epsilon ). \] \end{definition}

This definition is motivated by Weiss' notion of uniform mixing from the special case when $G$ is amenable: see \cite{Wei03} and also Section 4 of \cite{DGRS08}. Let us quickly recall that notion in the setting of a $G$-process $(A^G,\mu)$. First, if $K \subseteq G$ is finite and $S \subseteq G$ is another subset, then $S$ is \textbf{$K$-spread} if any distinct elements $s_1,s_2 \in S$ satisfy $s_1s_2^{-1} \not\in K$.  The process $(A^G,\mu)$ is \textbf{uniformly mixing} if, for any finite-valued measurable function $\phi:A^G\to B$ and any $\epsilon > 0$, there exists a finite subset $K \subseteq G$ with the following property: if $S\subseteq G$ is another finite subset which is $K$-spread, then
\[\mathrm{H}\bigl((\phi^G_\ast\mu)_S \bigr) \geq |S|\cdot (\mathrm{H}_\mu(\phi) - \epsilon).\]
Beware that we have reversed the order of multiplying $s_1$ and $s_2^{-1}$ in the definition of `$K$-spread' compared with \cite{DGRS08}.  This is because we work in terms of observables such as $\phi$ rather than finite partitions of $A^G$, and shifting an observable by the action of $g \in G$ corresponds to shifting the partition that it generates by $g^{-1}$.\\
\\
The principal result of \cite{RudWei00} is that completely positive entropy implies uniform mixing.  The reverse implication also holds: see \cite{GolSin00} or Theorem 4.2 in \cite{DGRS08}.  Thus, uniform mixing is an equivalent characterization of completely positive entropy.\\
\\
The definition of uniform mixing may be rephrased in terms of our proper metric $\rho$ on $G$ as follows.  The process $(A^G,\mu)$ is uniformly mixing if and only if, for any finite-valued measurable function $\phi:A^G\to B$ and any $\epsilon > 0$, there exists an $r < \infty$ with the following property: if $S\subseteq G$ is $r$-separated according to $\rho$, then
\[\mathrm{H}\bigl((\phi^G_\ast\mu)_S \bigr) \geq |S|\cdot (\mathrm{H}_\mu(\phi) - \epsilon).\]
This is equivalent to the previous definition because a subset $S \subseteq G$ is $r$-separated according to $\rho$ if and only if it is $B_\rho(1_G,r)$-spread. The balls $B_\rho(1_G,r)$ are finite, because $\rho$ is proper, and any other finite subset $K\subseteq G$ is contained in $B_\rho(1_G,r)$ for all sufficiently large $r$.\\
\\
This is the point of view on uniform mixing which motivates Definition \ref{def1}.  We use the right-invariant metric $\rho$ rather than the general definition of `$K$-spread' sets because it is more convenient later.\\
\\
Definition \ref{def1} is directly compatible with uniform mixing in the following sense.  If $G$ is amenable and $(F_n)_{n=1}^\infty$ is a F\o lner sequence for $G$, then the sets $F_n$ may be regarded as a sofic approximation to $G$: an element $g \in G$ acts on $F_n$ by translation wherever this stays inside $F_n$ and arbitrarily at points which are too close to the boundary of $F_n$.  If $(A^G,\mu)$ is an ergodic $G$-process, then it follows easily that the sequence of marginals $\mu_{F_n}$ locally and empirically converge to $\mu$ over this F\o lner-set sofic approximation.  If $(A^G,\mu)$ is uniformly mixing, then this sequence of marginals is clearly uniformly model-mixing in the sense of Definition~\ref{def1}.\\
\\
On the other hand, suppose that $(A^G,\mu)$ admits a sofic approximation and a locally and empirically convergent sequence of measures over that sofic approximation which is uniformly model-mixing. Then our Theorem \ref{thm1} shows that $(A^G,\mu)$ has completely positive sofic entropy.  If $G$ is amenable then sofic entropy always agrees with Kolmogorov-Sinai entropy \cite{Bow12a}, and this implies that $(A^G,\mu)$ has completely positive entropy and hence is uniformly mixing, by the result of \cite{RudWei00}.\\
\\
Thus if $G$ is amenable then completely positive entropy and uniform mixing are both equivalent to the existence of a sofic approximation and a locally and empirically convergent sequence of measures over it which is uniformly model-mixing.  If these conditions hold, then we expect that one can actually find a locally and empirically convergent and uniformly model-mixing sequence of measures over \emph{any} sofic approximation to $G$. This should follow using a similar kind of decomposition of the sofic approximants into F\o lner sets as in Bowen's proof in \cite{Bow12a}.  However, we have not explored this argument in detail.\\
\\
Definition~\ref{def1} applies to a shift-system with a finite state space.  It can be transferred to an abstract measure-preserving $G$-action on $(X,\mu)$ by fixing a choice of finite measurable partition of $X$.  However, in order to study actions which do not admit a finite generating partition, it might be worth looking for an extension of Definition~\ref{def1} to $G$-processes with arbitrary compact metric state spaces, similarly to the setting in~\cite{Aus15}.  We also do not pursue this generalization here.

\section{Proof of Theorem \ref{thm1}}

We will use basic facts about the Shannon entropy of observables (i.e. random variables with finite range), for which we refer the reader to Chapter $2$ of \cite{CovTho06}. Let $\Sigma = ( \sigma_n:G \to \mathrm{Sym}(V_n) )$, $(A^G,\mu)$ and  $(\mu_n)_{n=1}^\infty$ be as in the statement of Theorem \ref{thm1}. The following is the `finitary' model-measure analog of Lemma $5.1$ in \cite{DGRS08}. 

\begin{lemma} \label{lem4} Let $F \subseteq G$ be finite. Let $B$ be a finite set and let $\phi: A^G \to B$ be an $F$-local observable. Let $S_n \subseteq V_n$ be a sequence of sets such that $|S_n| \gtrsim |V_n|$. Then we have \[ \mathrm{H}(\mu_F) - \frac{1}{|S_n|}\mathrm{H}(\pi_{\sigma_n^F(S_n)*} \mu_n) \geq \mathrm{H}_\mu(\phi) - \frac{1}{|S_n|} \mathrm{H} \bigl(\pi_{S_n *} \phi^{\sigma_n}_* \mu_n \bigr) - o(1). \] \end{lemma}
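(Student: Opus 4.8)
The plan is to recognize the difference of the two model entropies appearing in the inequality as a single conditional entropy, to localize it over the sites of $S_n$, and then to feed in local and empirical convergence through the concavity of an entropy functional.

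First I would make the reduction. Since $\phi$ is $F$-local it factors as $\phi=\overline\phi\circ\pi_F$ for some $\overline\phi\colon A^F\to B$, so the value $\phi^{\sigma_n}(\overline a)(v)$ depends on $\overline a$ only through its restriction to $\sigma^F_n(v)$. As $\sigma^F_n(S_n)=\bigcup_{v\in S_n}\sigma^F_n(v)$, the $B^{S_n}$-valued variable $Y:=\pi_{S_n}(\phi^{\sigma_n}(\overline a))$ is a deterministic function of the $A^{\sigma^F_n(S_n)}$-valued variable $X:=\pi_{\sigma^F_n(S_n)}(\overline a)$. Abbreviating $\mathrm{H}(X)=\mathrm{H}(\pi_{\sigma^F_n(S_n)*}\mu_n)$ and $\mathrm{H}(Y)=\mathrm{H}(\pi_{S_n*}\phi^{\sigma_n}_*\mu_n)$, the asserted inequality rearranges exactly to $\tfrac1{|S_n|}\mathrm{H}(X\mid Y)\le \mathrm{H}(\mu_F)-\mathrm{H}_\mu(\phi)+o(1)$.

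Next I would localize. For $v\in S_n$ put $X_v:=\Pi^{\sigma_n}_{v,F}(\overline a)\in A^F$ and $Y_v:=\phi^{\sigma_n}(\overline a)(v)=\overline\phi(X_v)$, and let $\lambda_v:=(\Pi^{\sigma_n}_{v,F})_*\mu_n\in\mathrm{Prob}(A^F)$. Because the windows $\sigma^F_n(v)$ cover $\sigma^F_n(S_n)$, the variable $X$ is a function of $(X_v)_{v\in S_n}$, so subadditivity of entropy together with the fact that conditioning on more variables does not increase entropy yields $\mathrm{H}(X\mid Y)\le\sum_{v\in S_n}\mathrm{H}(X_v\mid Y_v)$. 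Each summand equals $c(\lambda_v)$, where $c(\lambda):=\mathrm{H}(\lambda)-\mathrm{H}(\overline\phi_*\lambda)$, and the target constant is precisely $c(\mu_F)=\mathrm{H}(\mu_F)-\mathrm{H}_\mu(\phi)$. The functional $c$ is continuous and concave on $\mathrm{Prob}(A^F)$: reading it as a conditional entropy and applying the chain rule for mutual information gives $c(\sum_i p_i\lambda_i)-\sum_i p_i c(\lambda_i)=I(\mathrm{index};X_v\mid Y_v)\ge 0$. Thus everything reduces to bounding $\tfrac1{|S_n|}\sum_{v\in S_n}c(\lambda_v)$.

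Here local and empirical convergence enters: it forces the averaged window marginal $\tfrac1{|V_n|}\sum_{v\in V_n}\lambda_v=\mathbb{E}_{\mu_n}\big[\tfrac1{|V_n|}\sum_{v}\delta_{\Pi^{\sigma_n}_{v,F}(\overline a)}\big]$ to converge weakly to $\mu_F$, so Jensen's inequality for the concave $c$ and continuity give $\tfrac1{|V_n|}\sum_{v\in V_n}c(\lambda_v)\le c\big(\tfrac1{|V_n|}\sum_v\lambda_v\big)\to c(\mu_F)$. The step I expect to be the main obstacle is the passage from this average over all of $V_n$ to the average over $S_n$. Using only $c\ge 0$ and $|S_n|\gtrsim|V_n|$ one gets $\tfrac1{|S_n|}\sum_{v\in S_n}c(\lambda_v)\le\tfrac{|V_n|}{|S_n|}\,(c(\mu_F)+o(1))$, which already proves the lemma when $S_n$ has full density. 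To remove the factor $|V_n|/|S_n|$ for a general positive-density $S_n$ I would instead show that $c(\lambda_v)\le c(\mu_F)+o(1)$ for all but an $o(|V_n|)$-fraction of sites, the remaining sites contributing at most $\log|A^F|$ each. This near-constancy of the single-site marginals is the crux, and it cannot come from local and empirical convergence by itself — a spatially segregated model over a non-ergodic $\mu$ shows the $S_n$-average can genuinely exceed the $V_n$-average — but it does hold in the setting of Theorem \ref{thm1}: uniform model-mixing applied to singletons forces $\mathrm{H}(\lambda_v)\ge\mathrm{H}(\mu_F)-\epsilon$ for most $v$, and combined with the identity $\tfrac1{|V_n|}\sum_v D(\lambda_v\,\|\,\overline\lambda_n)=\mathrm{H}(\overline\lambda_n)-\tfrac1{|V_n|}\sum_v\mathrm{H}(\lambda_v)$ and Pinsker's inequality this pins most $\lambda_v$ near $\mu_F$, closing the argument.
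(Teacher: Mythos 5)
Your first two steps reproduce the paper's argument in different notation: writing the quantity to be bounded as $\tfrac{1}{|S_n|}\mathrm{H}(X\mid Y)$ and localizing it to $\sum_{v\in S_n}\mathrm{H}(X_v\mid Y_v)$ is exactly the paper's chain-rule computation $\mathrm{H}(\alpha)\le\mathrm{H}(\beta)+\sum_{s}\mathrm{H}(\alpha_s\mid\beta_s)$, and your identification of each summand with $c(\lambda_v)$ and of the target constant with $c(\mu_F)=\mathrm{H}_{\mu_F}(\iota\mid\theta)$ matches the paper's as well.

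Your final step, however, rests on a misreading of the hypothesis, and the justification you give for the detour is false. Local and empirical convergence in the sense of \cite{Aus15} is not merely convergence of the averaged window marginal $\tfrac{1}{|V_n|}\sum_{v}\lambda_v$: it includes \emph{local weak* convergence}, which says that for every weak* neighborhood $\mathcal{O}$ of $\mu$ one has $(\Pi_v^{\sigma_n})_*\mu_n\in\mathcal{O}$ for all but $o(|V_n|)$ of the individual sites $v\in V_n$. That is precisely the ``near-constancy of the single-site marginals'' you declared unobtainable: it gives $\lambda_v$ close to $\mu_F$ for $(1-o(1))|V_n|$ sites, hence --- using $|S_n|\gtrsim|V_n|$, so that an $o(|V_n|)$ exceptional set is also $o(|S_n|)$ --- for $(1-o(1))|S_n|$ of the sites of $S_n$; continuity of $\lambda\mapsto\mathrm{H}(\lambda)-\mathrm{H}(\overline{\phi}_*\lambda)$ on $\mathrm{Prob}(A^F)$ then closes the estimate, the exceptional sites contributing at most $\log|A^F|$ each. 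This is the paper's proof, and it uses neither concavity nor Jensen nor uniform model-mixing. Your purported counterexample --- a spatially segregated model over a non-ergodic $\mu$ --- satisfies the empirical half of the definition but violently fails the local half (each $(\Pi_v^{\sigma_n})_*\mu_n$ concentrates near a single ergodic component rather than near $\mu$), so it does not locally and empirically converge and refutes nothing. To your credit, the patch you build instead is not circular, since uniform model-mixing is among the ambient hypotheses of Theorem \ref{thm1}: singletons are vacuously $r$-separated, so $\mathrm{H}(\lambda_v)\ge\mathrm{H}(\mu_F)-\epsilon$ on $W_n$, and your divergence identity plus Pinsker, followed by a diagonalization in $\epsilon$, does pin most $\lambda_v$ near $\mu_F$ and yields the inequality. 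But this route saddles the lemma with an unnecessary hypothesis and obscures the fact that the statement already follows from the convergence assumption alone --- which matters structurally, since the paper's architecture keeps uniform model-mixing out of Lemma \ref{lem4} and invokes it only once, via the choice of $r$ and $W_n$ in the proof of Theorem \ref{thm1} itself.
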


\begin{proof}[Proof of Lemma \ref{lem4}] Let $\theta:A^F \to B$ be a map with $\theta \circ \pi_F = \phi$. Fix $n \in \mathbb{N}$ and $S \subseteq V_n$. Let $\alpha = \pi_{\sigma^F_n(S)}:A^{V_n} \to A^{\sigma^F_n(S)}$ and let $\beta = \pi_S \circ \phi^{\sigma_n}:A^{V_n} \to B^S$. For $s \in S$ let $\alpha_s = \Pi_{s,F}^{\sigma_n}:A^{V_n} \to A^F$ and let $\beta_s = \theta \circ \Pi_{s,F}^{\sigma_n}:A^{V_n} \to B$. Then we have $\alpha = (\alpha_s)_{s \in S}$ and $\beta = (\beta_s)_{s \in S}$. Enumerate $S = (s_k)_{k=1}^m$ and write $\alpha_{s_k} = \alpha_k$. All entropies in the following display are computed with respect to $\mu_n$. We have

\begin{align*} \mathrm{H}(\alpha) & = \mathrm{H}( \alpha_1,\ldots,\alpha_m) \\ &= \mathrm{H}(\alpha_1) + \sum_{k=1}^{m-1} \mathrm{H}( \alpha_{k+1} \vert \alpha_1,\ldots,\alpha_k ) \\ & = \mathrm{H}(\alpha_1,\beta_1) + \sum_{k=1}^{m-1} \mathrm{H}( \alpha_{k+1}, \beta_{k+1} \vert \alpha_1,\ldots,\alpha_k)  \\ & = \mathrm{H}(\beta_1) + \mathrm{H}(\alpha_1|\beta_1) + \sum_{k=1}^{m-1} \mathrm{H}(\beta_{k+1} \vert \alpha_1,\ldots,\alpha_k) + \sum_{k=1}^{m-1} \mathrm{H}( \alpha_{k+1} \vert \beta_{k+1},\alpha_1,\ldots,\alpha_k) \\ & \leq \mathrm{H}(\beta_1) + \sum_{k=1}^{m-1} \mathrm{H}(\beta_{k+1} \vert \beta_1,\ldots,\beta_k) + \sum_{k=1}^m \mathrm{H} ( \alpha_k | \beta_k) \\ & = \mathrm{H}(\beta) + \sum_{k=1}^m \mathrm{H}(\alpha_k | \beta_k). \end{align*}

Let $\iota$ be the identity map on $A^F$. Then \begin{align} |S| \cdot \mathrm{H}(\mu_F) - \mathrm{H}(\pi_{\sigma_n^F(S)*} \mu_n) & = |S| \cdot \mathrm{H}_{\mu_F}(\iota) - \mathrm{H}_{\mu_n}(\alpha) \nonumber \\ & \geq |S| \cdot \mathrm{H}_{\mu_F}(\theta) + |S| \cdot \mathrm{H}_{\mu_F}(\iota|\theta) - \mathrm{H}_{\mu_n} (\beta) - \sum_{s \in S} \mathrm{H}_{\mu_n}(\alpha_s | \beta_s) \nonumber \\ & = |S| \cdot \mathrm{H}_\mu(\phi) - \mathrm{H} \bigl(\pi_{S*} \phi_*^{\sigma_n} \mu_n \bigr) + |S| \cdot \mathrm{H}_{\mu_F}(\iota|\theta) - \sum_{s \in S} \mathrm{H}_{\mu_n}(\alpha_s|\beta_s). \label{eq1.1} \end{align}

Now allowing $n$ to vary, let $S_n \subseteq V_n$ be a sequence of sets such that $|S_n| \gtrsim |V_n|$. Write $\nu_n = \pi_{\sigma_n^F(S_n)*} \mu_n$. Let $s \in S_n$ be such that the obvious map from $F$ to $\sigma^F_n(s)$ is injective. Then the function $\overline{a} \mapsto \Pi^{\sigma_n}_{s,F}(\overline{a})$ provides an identification of $A^{\sigma^F_n(s)}$ with $A^F$. This identification sends $\alpha_s$ to $\iota$ and $\beta_s$ to $\theta$. When $n$ is large the $\sigma^F_n(s)$ marginal of $\mu_n$ will resemble $\mu_F$ for most $s \in S_n$. Since $\alpha_s$ and $\beta_s$ are $\pi_{\sigma^F_n(s)}$ measurable this implies that $\mathrm{H}_{\mu_F}(\iota|\theta) \approx \mathrm{H}_{\nu_n}(\alpha_s|\beta_s)$ for most $s$. More precisely, we can find a sequence of sets $C_n \subseteq S_n$ with  \[|C_n| = (1-o(1))|S_n| \] such that \[ \max_{s \in C_n} \hspace{2 pt} \bigl \vert \mathrm{H}_{\mu_F}(\iota|\theta) - \mathrm{H}_{\nu_n}(\alpha_s|\beta_s) \bigr \vert = o(1).\] Thus  \begin{align*} \left \vert |S_n| \cdot \mathrm{H}_{\mu_F}(\iota|\theta) - \sum_{s \in S_n} \mathrm{H}_{\nu_n}(\alpha_s|\beta_s) \right \vert & \leq \sum_{s \in C_n} \bigl \vert \mathrm{H}_{\mu_F}(\iota|\theta) - \mathrm{H}_{\nu_n}(\alpha_s|\beta_s) \bigr \vert + \sum_{s \in S_n \setminus C_n} \bigl \vert \mathrm{H}_{\mu_F}(\iota|\theta) - \mathrm{H}_{\nu_n}(\alpha_s|\beta_s) \bigr \vert \\ & =  o(|S_n|). \end{align*}

The lemma then follows from (\ref{eq1.1}) and the above. \end{proof}

Recall that for a measure space $(X,\mu)$ and two observables $\alpha$ and $\beta$ on $X$ the Rokhlin distance between $\alpha$ and $\beta$ is defined by \[ d^{\mathrm{Rok}}_\mu(\alpha,\beta) = \mathrm{H}_\mu(\alpha|\beta) + \mathrm{H}_\mu(\beta|\alpha). \] This is a pseudometric on the space of observables on $X$. An easy computation shows that if $\alpha_1,\ldots,\alpha_n$ and $\beta_1,\ldots,\beta_n$ are two families of observables on $X$ then \[ d^{\mathrm{Rok}}_\mu((\alpha_1,\ldots,\alpha_n),(\beta_1,\ldots,\beta_n)) \leq \sum_{k=1}^n d^{\mathrm{Rok}}_\mu(\alpha_k,\beta_k) .\]

\begin{lemma} Let $\phi,\psi:A^G \to B$ be two local observables. Let $S_n \subseteq V_n$ be a sequence of sets with $|S_n| \gtrsim |V_n|$. Then we have  \[  \frac{1}{|S_n|} \bigl \vert \mathrm{H}(\pi_{S_n *} \phi_*^{\sigma_n} \mu_n) - \mathrm{H}(\pi_{S_n *} \psi_*^{\sigma_n} \mu_n) \bigr \vert  \leq  d^{\mathrm{Rok}}_\mu (\phi,\psi) + o(1).  \] \end{lemma}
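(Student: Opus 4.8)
The plan is to reduce the claim to the elementary inequality
\[ \bigl| \mathrm{H}_\nu(\alpha) - \mathrm{H}_\nu(\beta) \bigr| \leq d^{\mathrm{Rok}}_\nu(\alpha,\beta), \]
valid for any two observables on a common probability space $(X,\nu)$, which follows at once from $\mathrm{H}_\nu(\alpha) - \mathrm{H}_\nu(\beta) = \mathrm{H}_\nu(\alpha|\beta) - \mathrm{H}_\nu(\beta|\alpha)$ together with the nonnegativity of conditional entropy. I would write $\Phi = \pi_{S_n} \circ \phi^{\sigma_n}$ and $\Psi = \pi_{S_n} \circ \psi^{\sigma_n}$, viewed as $B^{S_n}$-valued observables on $(A^{V_n},\mu_n)$, so that the two entropies in the statement are exactly $\mathrm{H}_{\mu_n}(\Phi)$ and $\mathrm{H}_{\mu_n}(\Psi)$. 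Applying the elementary inequality and then the subadditivity of the Rokhlin pseudometric over the coordinate families $\Phi = (\Phi_s)_{s\in S_n}$ and $\Psi = (\Psi_s)_{s\in S_n}$, where $\Phi_s = \phi\circ\Pi^{\sigma_n}_s$ and $\Psi_s = \psi\circ\Pi^{\sigma_n}_s$, I would bound
\[ \bigl| \mathrm{H}_{\mu_n}(\Phi) - \mathrm{H}_{\mu_n}(\Psi) \bigr| \leq d^{\mathrm{Rok}}_{\mu_n}(\Phi,\Psi) \leq \sum_{s \in S_n} d^{\mathrm{Rok}}_{\mu_n}(\Phi_s,\Psi_s). \]
It then suffices to show that the average over $s \in S_n$ of $d^{\mathrm{Rok}}_{\mu_n}(\Phi_s,\Psi_s)$ is at most $d^{\mathrm{Rok}}_\mu(\phi,\psi) + o(1)$.

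Next I would fix a finite $F \subseteq G$ for which both $\phi$ and $\psi$ are $F$-local, with factor maps $\theta_\phi,\theta_\psi : A^F \to B$. For each $s$ at which $g \mapsto \sigma^g_n\cdot s$ is injective on $F$, the map $\Pi^{\sigma_n}_{s,F}$ identifies $A^{\sigma^F_n(s)}$ with $A^F$ and carries $\Phi_s,\Psi_s$ to $\theta_\phi,\theta_\psi$; consequently the joint law of $(\Phi_s,\Psi_s)$ under $\mu_n$ is the image of the $\sigma^F_n(s)$-marginal of $\mu_n$ under the fixed pair $(\theta_\phi,\theta_\psi)$. Since entropy and conditional entropy are continuous functions of a probability vector on a fixed finite set, $d^{\mathrm{Rok}}_{\mu_n}(\Phi_s,\Psi_s)$ depends continuously on that marginal and equals $d^{\mathrm{Rok}}_\mu(\phi,\psi)$ when the marginal is exactly $\mu_F$. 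This is the same mechanism used in the proof of Lemma \ref{lem4}: local and empirical convergence of $(\mu_n)_{n}$ to $\mu$ guarantees that the $\sigma^F_n(s)$-marginal of $\mu_n$ is close in total variation to $\mu_F$ for all but $o(|V_n|)$ vertices $s$, and because $|S_n| \gtrsim |V_n|$ this exceptional set meets $S_n$ in only $o(|S_n|)$ points. Thus, for any $\delta > 0$, there is a set $C_n \subseteq S_n$ with $|C_n| = (1-o(1))|S_n|$ on which $d^{\mathrm{Rok}}_{\mu_n}(\Phi_s,\Psi_s) \leq d^{\mathrm{Rok}}_\mu(\phi,\psi) + \omega(\delta)$, where $\omega(\delta)\to 0$ as $\delta \to 0$ is a modulus of continuity for the map sending a law on $A^F$ to the Rokhlin distance between its $\theta_\phi$- and $\theta_\psi$-images.

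Finally I would dispose of the remaining coordinates trivially: each $\Phi_s$ and $\Psi_s$ takes values in $B$, so $d^{\mathrm{Rok}}_{\mu_n}(\Phi_s,\Psi_s) \leq 2\log|B|$ for every $s$. Summing,
\[ \sum_{s \in S_n} d^{\mathrm{Rok}}_{\mu_n}(\Phi_s,\Psi_s) \leq |C_n| \bigl( d^{\mathrm{Rok}}_\mu(\phi,\psi) + \omega(\delta) \bigr) + |S_n \setminus C_n|\cdot 2\log|B|, \]
and dividing by $|S_n|$ and using $|S_n\setminus C_n| = o(|S_n|)$ gives $\limsup_n |S_n|^{-1}\sum_s d^{\mathrm{Rok}}_{\mu_n}(\Phi_s,\Psi_s) \leq d^{\mathrm{Rok}}_\mu(\phi,\psi) + \omega(\delta)$; letting $\delta \to 0$ yields the desired $o(1)$ bound. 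The only genuinely non-formal step—and the point I expect to require the most care—is the transfer in the second paragraph: making precise how closeness of the local marginal $\pi_{\sigma^F_n(s)*}\mu_n$ to $\mu_F$ forces closeness of the associated Rokhlin distances, and verifying that local and empirical convergence indeed supplies this on a density-$(1-o(1))$ subset of $S_n$. Everything else is a formal manipulation of Shannon entropy.
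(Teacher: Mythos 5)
Your proposal is correct and follows essentially the same route as the paper's proof: the bound $\vert \mathrm{H}(\alpha)-\mathrm{H}(\beta)\vert \leq d^{\mathrm{Rok}}_{\mu_n}(\alpha,\beta)$, subadditivity of the Rokhlin pseudometric over the coordinate observables $\phi\circ\Pi^{\sigma_n}_s$ and $\psi\circ\Pi^{\sigma_n}_s$, the identification of these with the fixed maps $\theta,\kappa$ on $A^F$ at vertices where $g\mapsto\sigma_n^g\cdot s$ is injective on $F$, and local and empirical convergence to get a density-$(1-o(1))$ subset $C_n\subseteq S_n$ on which the per-coordinate Rokhlin distances are within $o(1)$ of $d^{\mathrm{Rok}}_\mu(\phi,\psi)$. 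Your closing step bounding the exceptional coordinates by $2\log\vert B\vert$ is exactly the (implicit) final step in the paper, so there is no gap.
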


\begin{proof} Let $\alpha_n = \pi_{S_n} \circ \phi^{\sigma_n}: A^{V_n} \to B^{S_n}$ and let $\beta_n = \pi_{S_n} \circ \psi^{\sigma_n}: A^{V_n} \to B^{S_n}$. Let $F$ be a finite subset of $G$ such that both $\phi$ and $\psi$ are $F$-local. Let $\theta:A^F \to B$ be a map such that $\theta \circ \pi_F = \phi$ and let $\kappa:A^F \to B$ be a map such that $\kappa \circ \pi_F = \psi$. For $s \in S_n$ let $\alpha_{n,s} = \theta \circ \Pi_{s,F}^{\sigma_n}: A^{V_n} \to B$ so that $\alpha_n = (\alpha_{n,s})_{s \in S_n}$. Also let $\beta_{n,s} = \kappa \circ \Pi_{s,F}^{\sigma_n}: A^{V_n} \to B$. Then we have \begin{align} \frac{1}{|S_n|} \bigl \vert  \mathrm{H}(\pi_{S_n *} \phi^{\sigma_n}_* \mu_n) - \mathrm{H}(\pi_{S_n *} \psi^{\sigma_n}_* \mu_n) \bigr \vert \nonumber &= \frac{1}{|S_n|} \bigl \vert \mathrm{H}_{\mu_n}(\alpha_n) - \mathrm{H}_{\mu_n}(\beta_n) \bigr \vert \nonumber \\ & \leq \frac{1}{|S_n|} \cdot d^{\mathrm{Rok}}_{\mu_n}(\alpha_n,\beta_n) \nonumber \\ & = \frac{1}{|S_n|} \cdot d^{\mathrm{Rok}}_{\mu_n}\bigl( (\alpha_{n,s})_{s \in S_n},(\beta_{n,s})_{s \in S_n}\bigr) \nonumber \\ & \leq \frac{1}{|S_n|} \sum_{s \in S_n} d^{\mathrm{Rok}}_{\mu_n} (\alpha_{n,s},\beta_{n,s}) \label{eq2.0} \end{align}

If the map $g \mapsto \sigma_n^g \cdot s$ is injective on $F$, we can identify $A^{\sigma_n^F(s)}$ with $A^F$ and thereby identify $\alpha_{n,s}$ with $\theta$ and $\beta_{n,s}$ with $\kappa$. Note that \[ d^{\mathrm{Rok}}_{\mu_F} (\theta,\kappa)  = d^{\mathrm{Rok}}_\mu (\phi,\psi ).\] It follows that for any $\epsilon > 0$ we can find a weak star neighborhood $\mathcal{O}$ of $\mu$ such that if $s \in S_n$ is such that $(\Pi_s^{\sigma_n})_* \mu_n \in \mathcal{O}$ then \[ \Bigl \vert d^\mathrm{Rok}_{\mu_n} (\alpha_{n,s},\beta_{n,s}) - d^{\mathrm{Rok}}_\mu (\phi,\psi)  \Bigr \vert < \epsilon. \]

Thus, since $\mu_n$ locally and empirically converges to $\mu$, there are sets $C_n \subseteq S_n$ with $|C_n| = (1-o(1))|S_n|$ such that \begin{equation} \label{eq7.7} \max_{s \in C_n} \hspace{2 pt} \Bigl \vert d^{\mathrm{Rok}}_{\mu_n} ( \alpha_{n,s},\beta_{n,s}) - d^{\mathrm{Rok}}_\mu (\phi,\psi)  \Bigr \vert = o(1). \end{equation}

The lemma now follows from (\ref{eq2.0}) and (\ref{eq7.7}). \end{proof}

\begin{corollary} \label{lem6} Let $\bigl(\phi_m: A^G \to B \bigr)_{m=1}^\infty$ be a sequence of local observables and let $\phi:A^G \to B$ be a local observable. Let $S_n \subseteq V_n$ be a sequence of sets with $|S_n| \gtrsim |V_n|$. Then if $(m_n)_{n=1}^\infty$ increases to infinity at a slow enough rate we have  \[  \frac{1}{|S_n|} \bigl \vert \mathrm{H}(\pi_{S_n *} \phi_*^{\sigma_n} \mu_n) - \mathrm{H}(\pi_{S_n *} \phi_{m_n *}^{\sigma_n} \mu_n) \bigr \vert  \leq  d^{\mathrm{Rok}}_\mu (\phi,\phi_{m_n}) + o(1).  \] \end{corollary}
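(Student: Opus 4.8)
The plan is to obtain the Corollary as a diagonalization of the preceding Lemma, which already controls a single fixed pair of local observables. The essential subtlety is that the error term in that Lemma is a genuine $o(1)$ as $n \to \infty$ only after the pair of observables has been fixed; since the Corollary lets the second observable vary with $n$, I cannot simply substitute $\psi = \phi_{m_n}$ into the Lemma and quote it verbatim, but must instead choose the rate at which $m_n \to \infty$ in tandem with how fast the Lemma's error decays for each fixed $m$.

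First, for each fixed $m$ I would set
\[ \delta_m(n) = \frac{1}{|S_n|} \bigl \vert \mathrm{H}(\pi_{S_n *} \phi_*^{\sigma_n} \mu_n) - \mathrm{H}(\pi_{S_n *} \phi_{m *}^{\sigma_n} \mu_n) \bigr \vert - d^{\mathrm{Rok}}_\mu (\phi,\phi_m). \]
Applying the preceding Lemma to the pair $\phi$ and $\phi_m$ (each of which is local, so that the Lemma applies) gives $\limsup_{n \to \infty} \delta_m(n) \leq 0$ for every fixed $m$. In particular, for each $m$ there is an index $N_m$ with $\delta_m(n) \leq 1/m$ whenever $n \geq N_m$, and after replacing each $N_m$ by $\max\{N_1,\ldots,N_m\}$ I may assume $N_1 \leq N_2 \leq \cdots$.

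Next I would build the diagonal sequence by putting $m_n = \max\{m \leq n : N_m \leq n\}$, with the convention that $m_n = 1$ when this set is empty. For any fixed $M$, once $n \geq \max\{M, N_M\}$ the value $M$ is an admissible candidate, so $m_n \geq M$; hence $m_n \to \infty$, and it does so arbitrarily slowly, governed by the growth of the $N_m$. By the very definition of $m_n$ we have $N_{m_n} \leq n$ for all large $n$, so $\delta_{m_n}(n) \leq 1/m_n$. Rearranging the definition of $\delta_{m_n}(n)$ then gives
\[ \frac{1}{|S_n|} \bigl \vert \mathrm{H}(\pi_{S_n *} \phi_*^{\sigma_n} \mu_n) - \mathrm{H}(\pi_{S_n *} \phi_{m_n *}^{\sigma_n} \mu_n) \bigr \vert \leq d^{\mathrm{Rok}}_\mu (\phi,\phi_{m_n}) + \frac{1}{m_n}, \]
and since $m_n \to \infty$ the term $1/m_n$ is $o(1)$, which is exactly the claimed inequality.

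The only real obstacle is the routine but slightly delicate bookkeeping of this diagonal argument: one must arrange simultaneously that $m_n$ tends to infinity and that it never outruns the threshold $N_{m_n}$ beyond which the Lemma's error for index $m_n$ has already become small. No new estimates beyond the preceding Lemma are required.
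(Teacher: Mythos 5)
Your proposal is correct and matches the intended argument: the paper gives no explicit proof of this corollary, treating it as an immediate diagonalization of the preceding lemma, which is exactly what you carry out. Your construction also implicitly yields the full ``slow enough rate'' claim, since monotonicity of the thresholds $N_m$ shows any sequence $m'_n \to \infty$ with $m'_n \leq m_n$ satisfies the same bound.
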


\begin{proof}[Proof of Theorem \ref{thm1}] Let $B$ be a finite set and let $\psi: A^G \to B$ be an observable with $\mathrm{H}_\mu(\psi) > 0$. Let $(\phi_m)_{m=1}^\infty$ be an AL approximating sequence for $\psi$ rel $\mu$ (see Definition $4.4$ in \cite{Aus15}). Then the sequence $\phi_m$ converges to $\psi$ in $d_\mu^{\mathrm{Rok}}$. In particular, $\phi_m$ is a Cauchy sequence and so we can find $M \in \mathbb{N}$ so that for all $m \geq M$ we have \begin{equation} d_\mu^{\mathrm{Rok}} (\phi_m,\phi_M) \leq \frac{\mathrm{H}_\mu(\psi)}{8}. \label{eq3.3} \end{equation}

We will also assume $M$ is large enough that \begin{equation} \label{eq3.7} \mathrm{H}_\mu(\phi_{M}) \geq \frac{\mathrm{H}_\mu(\psi)}{2}.\end{equation}  Let $F$ be a finite subset of $G$ such that $\phi_M$ is $F$-local. Then Definition \ref{def1} provides an $r < \infty$ and a sequence of subsets $W_n \subseteq V_n$ such that $|W_n| = (1-o(1))|V_n|$ and if $S \subseteq W_n$ is $r$-separated then \begin{equation} \mathrm{H}(\mu_F)- \frac{1}{|S|} \mathrm{H}(\pi_{\sigma_n^F(S) *} \mu_n) \leq \frac{\mathrm{H}_\mu(\phi_{M})}{2}. \label{eq3.5} \end{equation} 

Let $K = |B_\rho(\mathrm{1}_G,r)|$. Since $\sigma_n$ is a sofic approximation there are sets $W'_n \subseteq V_n$ with $|W'_n| = (1-o(1))|V_n|$ such that if $w \in W'_n$ then the $\rho_n$ ball of radius $r$ around $w$ has cardinality at most $K$. Write $Y_n = W_n \cap W'_n$ and note that we have $|Y_n| = (1-o(1))|V_n|$. For each $n$ let $S_n$ be an $r$-separated subset of $Y_n$ with maximal cardinality. Then $Y_n \subseteq \bigcup_{s \in S_n} B_{\rho_n}(s,r)$ so that \begin{equation} |S_n| \geq \frac{|Y_n|}{K} = (1-o(1))\frac{|V_n|}{K}. \label{eq3.9} \end{equation}

By Lemma \ref{lem4} and (\ref{eq3.5}) we have \[ \mathrm{H}_\mu(\phi_M) - \frac{1}{|S_n|} \mathrm{H} \bigl( \pi_{S_n *} \phi_{M*}^{\sigma_n} \mu_n \bigr) - o(1) \leq \frac{\mathrm{H}_\mu(\phi_M)}{2}  \]

so that from (\ref{eq3.7}) we have \begin{equation}  \label{eq3.8} \frac{\mathrm{H}_\mu(\psi)}{4} - o(1) \leq \frac{1}{|S_n|} \mathrm{H}\bigl(\pi_{S_n *} \phi_{M*}^{\sigma_n} \mu_n \bigr).  \end{equation}

By Proposition $5.15$ in \cite{Aus15} if $(m_n)_{n=1}^\infty$ increases to infinity at a slow enough rate then $(\phi_{m_n}^{\sigma_n})_* \mu_n$ will locally and empirically converge to $\psi^G_* \mu$. Since $A$ is finite, by the same argument as for Proposition $8.1$ in \cite{Aus15} we have \begin{align} \underline{\mathrm{h}}_\Sigma^{\mathrm{q}} \bigl(\psi^G_* \mu \bigr) &\geq \sup_{\epsilon >0} \liminf_{n \to \infty} \frac{1}{|V_n|} \log \mathrm{cov}_\epsilon \bigl((\phi_{m_n}^{\sigma_n})_* \mu_n \bigr) \nonumber \\ & \geq \liminf_{n \to \infty} \frac{1}{|V_n|} \hspace{2 pt} \mathrm{H}\bigl((\phi_{m_n}^{\sigma_n})_* \mu_n \bigr) \label{eq3.2} \end{align}

where the second inequality follows from Lemma \ref{lem1}. We also assume that $(m_n)_{n=1}^\infty$ increases slowly enough for Corollary \ref{lem6} to hold. By (\ref{eq3.3}) we have \[ \left \vert \frac{1}{|S_n|} \mathrm{H} \bigl(\pi_{S_n *} \phi_{M*}^{\sigma_n} \mu_n \bigr) - \frac{1}{|S_n|} \mathrm{H}\bigl(\pi_{S_n *} (\phi_{m_n}^{\sigma_n})_* \mu_n \bigr) \right \vert  \leq \frac{\mathrm{H}_\mu(\psi)}{8} + o(1).  \]

Combining this with (\ref{eq3.8}) we see that 

\[ \frac{1}{|S_n|} \mathrm{H}\bigl(\pi_{S_n *} (\phi^{\sigma_n}_{m_n})_* \mu_n \bigr) \geq \frac{\mathrm{H}_\mu(\psi)}{8} - o(1). \]

By the above and (\ref{eq3.9}) we have that for all sufficiently large $n$, \begin{equation} \label{eq3.10} \mathrm{H}\bigl( (\phi^{\sigma_n}_{m_n})_* \mu_n \bigr) \geq \frac{\mathrm{H}_\mu(\psi)}{8K+1} |V_n| \end{equation}

Theorem \ref{thm1} now follows from (\ref{eq3.2}) and (\ref{eq3.10}). \end{proof}

\section{Proof of Theorem \ref{thm2}}

Let $(A^\mathbb{Z},\nu)$ be a uniformly mixing $\mathbb{Z}$-process, and for each positive integer $l$ let $\nu_l$ be the marginal of $\nu$ on $A^l$. Let $\Sigma = (\sigma_n:G \to \mathrm{Sym}(V_n))$ be an arbitrary sofic approximation to $G$. Let $h \in G$ have infinite order and write $H = \langle h \rangle \cong \mathbb{Z}$. We construct a measure $\mu_n$ on $A^{V_n}$ for each $n \in \mathbb{N}$. We will later show that the sequence $(\mu_n)_{n=1}^\infty$ is uniformly model-mixing and locally and empirically converges to $\mu$ over $\Sigma$.\\
\\
We first construct a measure $\mu_n^l$ on $A^{V_n}$ for each pair $(n,l)$ with $l$ much smaller than $n$. For a given $n$, the single permutation $\sigma_n^h$ partitions $V_n$ into a disjoint union of cycles. Since $h$ has infinite order and $\Sigma$ is a sofic approximation, once $n$ is large most points will be in very long cycles. In particular we assume that most points are in cycles with length much larger than $l$. Partition the cycles into disjoint paths so that as many of the paths have length $l$ as possible, and let $\mathcal{P}^l_n = (P^l_{n,1},\ldots,P^l_{n,k_n})$ be the collection of all length-$l$ paths that result (so $\mathcal{P}^l_n$ is not a partition of the whole of $V_n$, but covers most of it). Fix any element $\overline{a}_0 \in A^{V_n}$ and define a random element $\overline{a} \in A^{V_n}$ by choosing each restriction $\overline{a} \upharpoonright_{P^l_{n,i}}$ independently with the distribution of $\nu_l$ and extending to the rest of $V_n$ according to $\overline{a}_0$. Let $\mu_n^l$ be the law of this $\overline{a}$.\\
\\
Now let $(l_n)_{n=1}^\infty$ increase to infinity at a slow enough rate that the following two conditions are satisfied:
\begin{enumerate}
 \item[(a)] The number of points of $V_n$ that lie in some member of the family $\mathcal{P}^{l_n}_n$ is $(1-o(1))|V_n|$.
\item[(b)] Whenever $g,g' \in G$ lie in distinct right cosets of $H$, so that $g^{-1} h^p g' \neq 1_G$ for all $p \in \mathbb{Z}$, we have
\[|\{v \in V_n:\ (\sigma_n^g)^{-1}(\sigma_n^h)^p\sigma_n^{g'}\cdot v = v\ \hbox{for some}\ p \in \{-l_n,\dots,l_n\}\}|= o(|V_n|)\]
\end{enumerate}
Set $\mu_n = \mu_n^{l_n}$. We separate the proof that $(\mu_n)_{n=1}^\infty$ has the required properties into two lemmas.

\begin{lemma} \label{lem2.1} $(\mu_n)_{n=1}^\infty$ locally and empirically converges to $\mu$ over $\Sigma$. \end{lemma}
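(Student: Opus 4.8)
The plan is to verify local and empirical convergence directly from its definition in \cite{Aus15}: writing $P^{\sigma_n}_{\overline a} = \frac{1}{|V_n|}\sum_{v \in V_n}\delta_{\Pi^{\sigma_n}_v(\overline a)} \in \mathrm{Prob}(A^G)$ for the empirical distribution of local pullbacks, I must show that $P^{\sigma_n}_{\overline a} \to \mu$ weak$^\ast$ in $\mu_n$-probability. Since weak$^\ast$ convergence is tested on cylinders and $\pi_F \circ \Pi^{\sigma_n}_v = \Pi^{\sigma_n}_{v,F}$, this reduces to showing, for every finite $F \subseteq G$ and every pattern $b \in A^F$, that the random variable $X^F_b(\overline a) = \frac{1}{|V_n|}\sum_{v}\mathbf 1\{\Pi^{\sigma_n}_{v,F}(\overline a) = b\}$ converges to the constant $\mu_F(b)$ in $\mu_n$-probability. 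I establish this through a first-moment computation together with a variance estimate.

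For the first moment, fix $F$ and $b$ and isolate a set of \emph{good} vertices $v$ of density $1-o(1)$. A vertex is good if: (i) by soficity, $g \mapsto \sigma_n^g\cdot v$ is injective on $F$ and $\sigma_n^{h^p t}\cdot v = (\sigma_n^h)^p\sigma_n^t\cdot v$ for all the finitely many $h^p t \in F$; (ii) by condition (a), every point $\sigma_n^g\cdot v$ lies in some path of $\mathcal P^{l_n}_n$; (iii) for each right coset $Ht$ meeting $F$, the window $\{(\sigma_n^h)^p\sigma_n^t\cdot v : h^pt \in F\}$ lies entirely inside a single length-$l_n$ path, which fails for at most an $O(\mathrm{diam}_\rho(F)/l_n)=o(1)$ fraction of $v$ since the window has bounded diameter along the cycle while the cuts are spaced $l_n$ apart; and (iv) by condition (b), points arising from elements of $F$ in distinct cosets lie in distinct paths. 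I first write the marginal $\mu_F$ of the coinduced measure as a product, over the cosets $Ht$ meeting $F$, of the $\nu$-marginals on the corresponding windows of relative $H$-positions, distinct cosets being independent because $\mu = \nu^T$. For a good $v$, since $\mu_n$ places an independent copy of $\nu_{l_n}$ on each path, the law of $\Pi^{\sigma_n}_{v,F}(\overline a)$ factors across cosets exactly as $\mu_F$ does; within each coset the relevant coordinates form a sub-window of a single $\nu_{l_n}$-block, and by shift-invariance of $\nu$ the marginal of $\nu_{l_n}$ on that sub-window equals the corresponding $\nu$-marginal irrespective of its offset in the block. Hence $\mu_n(\Pi^{\sigma_n}_{v,F}(\overline a)=b)=\mu_F(b)$ for every good $v$, and averaging (the $o(|V_n|)$ bad vertices contribute $o(1)$) gives $\mathbb E_{\mu_n}[X^F_b]\to \mu_F(b)$.

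For the variance I expand $\mathrm{Var}_{\mu_n}(X^F_b) = |V_n|^{-2}\sum_{v,v'}\mathrm{Cov}\bigl(\mathbf 1\{\Pi^{\sigma_n}_{v,F}=b\},\mathbf 1\{\Pi^{\sigma_n}_{v',F}=b\}\bigr)$. The key point is that the indicator at $v$ depends only on the $\overline a$-values on the at most $|F|$ paths met by its window, so the covariance vanishes unless the windows of $v$ and $v'$ meet a common path. Since each $\sigma_n^g$ is a bijection, a fixed point $w$ lies in the window of at most $|F|$ vertices, so each length-$l_n$ path, having $l_n$ points, is met by the windows of at most $|F|\,l_n$ vertices; summing $(|F|\,l_n)^2$ over the $\lesssim |V_n|/l_n$ paths bounds the number of correlated pairs by $\lesssim |F|^2 l_n |V_n|$, whence $\mathrm{Var}_{\mu_n}(X^F_b) \lesssim |F|^2 l_n/|V_n| = o(1)$ because $l_n$ grows far more slowly than $|V_n|$. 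Combined with the first moment this gives convergence in probability, proving the lemma.

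I expect the main obstacle to be the bookkeeping in the first-moment step: correctly translating the transversal/coset description of $\mu_F$ into the path structure of the single permutation $\sigma_n^h$, and verifying, using soficity together with conditions (a) and (b) and the boundary estimate on windows straddling path cuts, that the local law is \emph{exactly} $\mu_F$ on the good set. The variance estimate, though it supplies the essential ``empirical'' concentration, is then a routine independence-based counting argument.
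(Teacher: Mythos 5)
Your proposal is correct, and while its first half coincides with the paper's argument, the second half takes a genuinely different route. The first-moment step is essentially the paper's proof verbatim: the paper defines the same set $W_n$ of good vertices (its conditions (i)--(iii) match your (i)--(iv), with the same counting of windows straddling path cuts, via condition (a) and the estimate $p_n|I| \lesssim |V_n||I|/l_n = o(|V_n|)$, and the same use of condition (b) to separate distinct cosets), and then shows that for every good $v$ the law of $\Pi^{\sigma_n}_{v,F}$ under $\mu_n$ is \emph{exactly} $\mu_F$, using independence of $\mu_n$ across paths and shift-invariance of $\nu$. Where you diverge is the upgrade from this local statement to the empirical half of the definition: the paper notes that the coinduced process is ergodic and invokes Corollary 5.6 of \cite{Aus15}, by which local weak$^\ast$ convergence to an ergodic target already implies local and empirical convergence (the abstract mechanism being that subsequential limits of the laws of $P^{\sigma_n}_{\overline a}$ are supported on invariant measures with barycenter $\mu$, and ergodicity makes $\mu$ extreme). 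You instead prove concentration by hand with the second-moment bound $\mathrm{Var}_{\mu_n}(X^F_b) \lesssim |F|^2 l_n/|V_n|$, exploiting independence across paths; this is valid --- the covariance does vanish unless the two windows meet a common path, since the leftover $\overline a_0$-coordinates are deterministic and contribute nothing --- and it buys a self-contained, quantitative argument that needs no ergodicity of the coinduced process and no machinery from \cite{Aus15} beyond the definitions. Two points you should make explicit. First, local and empirical convergence in the sense of \cite{Aus15} requires local weak$^\ast$ convergence \emph{in addition to} concentration of $P^{\sigma_n}_{\overline a}$; your opening reduction states only the latter, but your first-moment computation actually delivers the former (the pullback law at each good vertex equals $\mu_F$ exactly, on a $(1-o(1))$-fraction of $V_n$), so record that per-vertex statement rather than only the averaged conclusion $\mathbb{E}_{\mu_n}[X^F_b] \to \mu_F(b)$. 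Second, your variance bound needs $l_n = o(|V_n|)$, a constraint the paper's route never uses; it is harmless here because decreasing $l_n$ only makes conditions (a) and (b) easier to satisfy, so the ``slow enough rate'' in the construction of $(l_n)_{n=1}^\infty$ may be taken to include it, but this should be said.
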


\begin{proof}[Proof of Lemma \ref{lem2.1}] Since $(A^G,\mu)$ is ergodic, by Corollary $5.6$ in \cite{Aus15} it suffices to show that $\mu_n$ locally weak star converges to $\mu$. For a set $I \subseteq \mathbb{Z}$ write $h^I = \{h^i: i \in I\}$. Fix a finite set $F \subseteq G$. By enlarging $F$ if necessary we can assume there is an interval $I \subseteq \mathbb{Z}$ such that $F = \bigcup_{k=1}^m h^I t_k$ for $t_1,\ldots,t_m$ in some transversal for the right cosets of $H$ in $G$. For each $g \in F$ let $j_g$ be a fixed element of $A$. Let $B \subseteq A^G$ be defined by \[ B = \bigl \{ a \in A^G: a(g) = j_g \mbox{ for all } g \in F \bigr \} \] and let $\epsilon > 0$. Then sets such as \[ \mathcal{O} = \bigl \{ \eta \in \mathrm{Prob}(A^G): \eta(B) \approx_\epsilon \mu(B) \bigr \} \] form a subbasis of neighborhoods around $\mu$.  It therefore suffices to show that when $n$ is large we have $(\Pi_v^{\sigma_n})_* \mu_n \in \mathcal{O}$ with high probability in the choice of $v \in V_n$.\\
\\
For $k \in \{1,\ldots,m\}$ let \[ B_k = \bigl \{ x \in A^\mathbb{Z}: x(i) = j_{h^i t_k} \mbox{ for all } i \in I \bigr\}. \] Note that $\mu$ is defined in such a way that $\mu(B) = \prod_{i=1}^k \nu(B_k)$. Now, let $W_n$ be the set of all points $v \in V_n$ such that the following conditions hold.

\begin{enumerate}[label=(\roman*)]
\item The map $g \mapsto \sigma^g_n \cdot v$ is injective on $F$.
\item $\sigma^{h^i t_k}_n  \cdot v = (\sigma^h_n)^i \sigma^{t_k}_n \cdot v$ for all $i \in I$ and $k \in \{1,\ldots,m\}$.
\item For all pairs $g,g' \in F$, $\sigma_n^g \cdot v$ is in the same path as $\sigma_n^{g'} \cdot v$ if and only if $g$ and $g'$ lie in the same right coset of $H$. In particular, each of the images $\sigma_n^g\cdot v$ for $g \in F$ is contained in some member of $\mathcal{P}_n^{l_n}$. \end{enumerate}

We claim that $|W_n| = (1-o(1))|V_n|$. Clearly Conditions $(\mathrm{i})$ and $(\mathrm{ii})$ are satisfied with high probability in $v$, and so is the last part of Condition $(\mathrm{iii})$, by Condition (a) in the choice of $(l_n)_{n=1}^\infty$.\\
\\
Fix $g,g' \in F$ and suppose that $g$ and $g'$ are in the same coset of $H$, so that we have $g = h^i t_k$ and $g' = h^{i'} t_k$ for some $k \in \{1,\ldots,m\}$ and $i,i' \in I$. If $v$ satisfies Condition $(\mathrm{ii})$ then we have \[ (\sigma^h_n)^{i'-i} \sigma^g_n \cdot v = (\sigma^h_n)^{i'-i} (\sigma^h_n)^i \sigma_n^{t_k} \cdot v = (\sigma^h_n)^{i'} \sigma_n^{t_k} \cdot v = \sigma^{g'}_n \cdot v \] so that $\sigma^g_n \cdot v$ and $\sigma^{g'}_n \cdot v$ will lie in the same path assuming that $\sigma_n^{t_k} \cdot v$ is not one of the first or last $|I|$ elements of its path. Note that for any $v \in V_n$ we have \[ \bigl \vert \bigl \{w: \sigma^{t_k}_n \cdot w = v \mbox{ for some } k \in \{1,\dots,m\} \bigr\} \bigr \vert \leq m. \] It follows that the number of points $v \in V_n$ such that $\sigma^{t_k}_n \cdot v$ is one of the first or last $|I|$ elements of a path is at most $2m p_n |I| + o(|V_n|)$ where $p_n$ is the total number of paths in $V_n$. By Condition (a) in the choice of $(l_n)_{n=1}^\infty$, most of $V_n$ is covered by paths whose lengths increase to infinity.  Since also $p_n = o(V_n)$, it follows that $\sigma^g_n \cdot v$ lies in the same path as $\sigma^{g'}_n \cdot v$ with high probability in $v$.\\
\\
On the other hand, suppose that $g$ and $g'$ are in distinct cosets of $H$.  Assume that $\sigma^g_n \cdot v$ and $\sigma^{g'}_n \cdot v$ are in the same path. Then there is $p \in \{-l_n,\ldots,l_n\}$ with $\sigma^g_n \cdot v = (\sigma_n^h)^p \sigma^{g'}_n \cdot v$, and hence $(\sigma_n^g)^{-1}(\sigma_n^h)^p \sigma^{g'}_n \cdot v = v$. By Condition (b) in the choice of $(l_n)_{n=1}^\infty$ there are only $o(|V_n|)$ vertices $v$ for which this holds. Thus we have established the claim.\\
\\
Now let $v \in W_n$. We have \[ (\Pi_v^{\sigma_n})_* \mu_n(B) = \mu_n\bigl(\bigl\{ \overline{a} \in A^{V_n}: \overline{a}(\sigma_n^g \cdot v) = j_g \mbox{ for all } g \in F \bigr \} \bigr). \] For each $k \in \{1,\ldots,m\}$ the set $\{ (\sigma^h_n)^i \sigma^{t_k}_n \cdot v: i \in I   \}$  is contained in a single path. Since the marginal of $\mu_n$ on each path is $\nu_{l_n}$ the probability that \[ \overline{a}\bigl((\sigma^h_n)^i \sigma^{t_k}_n \cdot v \bigr) = j_{h^i t_k} \] for all $i \in I$ is equal to $\nu_{l_n}(B_k) = \nu(B_k)$. On the other hand, the marginals of $\mu_n$ on distinct paths are independent, so the probability that $\overline{a}(\sigma^g_n \cdot v) = j_g$ for all $g \in F$ is actually equal to $\prod_{i=1}^k \nu(B_k)$. \end{proof}

If $(A^\mathbb{Z},\nu)$ is weakly mixing, then so is the co-induced $G$-action. In particular, this certainly holds if $(A^{\mathbb{Z}},\nu)$ is uniformly mixing.  Therefore we may immediately promote Lemma~\ref{lem2.1} to the fact that $(\mu_n)_{n=1}^\infty$ locally and doubly empirically converges to $\mu$ over $\Sigma$, by Lemma 5.15 of~\cite{Aus15}.  In fact, we suspect that local and double empirical convergence holds here whenever $(A^{\mathbb{Z}},\nu)$ is ergodic.

\begin{lemma} \label{lem2.2} $(\mu_n)_{n=1}^\infty$ is uniformly model-mixing. \end{lemma}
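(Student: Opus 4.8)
The plan is to exploit the product structure of $\mu_n$ across the paths in $\mathcal{P}_n^{l_n}$ together with the uniform mixing of $\nu$ inside each path. As in the proof of Lemma~\ref{lem2.1}, fix a finite $F\subseteq G$, which we may enlarge to the form $F=\bigcup_{k=1}^m h^I t_k$ for an interval $I\subseteq\mathbb{Z}$ and transversal elements $t_1,\dots,t_m$. Writing $\nu_I$ for the marginal of $\nu$ on the window $I$, the coinduced structure of $\mu$ gives the exact splitting
\[ \mathrm{H}(\mu_F)=\sum_{k=1}^m\mathrm{H}(\nu_I)=m\,\mathrm{H}(\nu_I), \]
since the blocks of $F$ lying over distinct right cosets of $H$ are $\mu$-independent and each carries a shifted copy of $\nu$. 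Given $\epsilon>0$, the idea is to view $\pi_{\sigma_n^F(S)\ast}\mu_n$ as the joint law of a family of length-$|I|$ windows, one for each pair $(s,k)$ with $s\in S$, and to bound its entropy window-by-window using that $\nu$ is uniformly mixing.

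For the choice of $r$ and $W_n$: apply the metric reformulation of uniform mixing of the $\mathbb{Z}$-process $\nu$ to the $I$-local observable $\pi_I:A^\mathbb{Z}\to A^I$ with tolerance $\epsilon/m$; this produces an $r'<\infty$ such that any $r'$-separated $S'\subseteq\mathbb{Z}$ satisfies $\mathrm{H}\bigl((\,(\pi_I)^\mathbb{Z}_\ast\nu)_{S'}\bigr)\ge|S'|(\mathrm{H}(\nu_I)-\epsilon/m)$. Let $L=\max_k\rho(1_G,t_k)$ and set $r=2L+r'\rho(1_G,h)$. I would take $W_n$ to be the set of $v\in V_n$ satisfying exactly the genericity conditions (i)--(iii) from Lemma~\ref{lem2.1} for the present $F$, together with the boundary condition that each $\sigma_n^{t_k}\cdot v$ lies at graph distance at least $|I|$ from the two endpoints of its path in $\mathcal{P}_n^{l_n}$. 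The argument of Lemma~\ref{lem2.1}, together with $p_n=o(|V_n|)$ and the fact that each path contributes only $O(m|I|)$ boundary vertices, shows $|W_n|=(1-o(1))|V_n|$. For $s\in W_n$ the window $\{(\sigma_n^h)^i\sigma_n^{t_k}\cdot s:i\in I\}$ is then well defined (via (ii)), lies entirely inside one path, and the $m$ windows attached to $s$ occupy $m$ distinct paths (via (iii)).

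The crux is to convert $\rho_n$-separation of $S$ into separation of windows inside each path; this is the step I expect to be most delicate to phrase correctly, although the metric makes it clean. Suppose two windows lying in a common path come from pairs $(s,k)$ and $(s',k')$ and that their base points $\sigma_n^{t_k}\cdot s$ and $\sigma_n^{t_{k'}}\cdot s'$ are at distance $d$ along that path, so $\sigma_n^{t_{k'}}\cdot s'=(\sigma_n^h)^d\sigma_n^{t_k}\cdot s$. If $s=s'$ then (iii) forces $k=k'$ and the windows coincide; so for distinct windows $s\ne s'$, and the triangle inequality for $\rho_n$ gives
\[ \rho_n(s,s')\le\rho_n(s,\sigma_n^{t_k}\cdot s)+\rho_n(\sigma_n^{t_k}\cdot s,\sigma_n^{t_{k'}}\cdot s')+\rho_n(\sigma_n^{t_{k'}}\cdot s',s')\le 2L+|d|\,\rho(1_G,h), \]
where I used the unconditional bound $\rho_n(v,\sigma_n^g\cdot v)\le\rho(1_G,g)$ for the two coset steps and summed $|d|$ cycle-edges each of weight at most $\rho(1_G,h)$ for the middle term. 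Since $S$ is $r$-separated with $r=2L+r'\rho(1_G,h)$, this forces $|d|\ge r'$. Hence the base points occurring in any fixed path form an $r'$-separated subset $S'_P$, identified with a subset of $\mathbb{Z}$, and distinct windows have distinct base points.

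It remains to assemble the bound. Because $\mu_n$ assigns independent copies of $\nu_{l_n}$ to distinct paths and is deterministic off the paths, and because for $s\in W_n$ every coordinate of $\sigma_n^F(S)$ lies in some path, independence across paths yields the exact additivity
\[ \mathrm{H}\bigl(\pi_{\sigma_n^F(S)\ast}\mu_n\bigr)=\sum_{P\in\mathcal{P}_n^{l_n}}\mathrm{H}\bigl(\pi_{\sigma_n^F(S)\cap P\,\ast}\mu_n\bigr). \]
Within a path $P$ the marginal of $\mu_n$ is $\nu_{l_n}$, the windows lie inside $P$, and their base points form the $r'$-separated set $S'_P$, so the joint law of these windows is exactly $(\,(\pi_I)^\mathbb{Z}_\ast\nu)_{S'_P}$ and uniform mixing gives $\mathrm{H}(\pi_{\sigma_n^F(S)\cap P\,\ast}\mu_n)\ge|S'_P|(\mathrm{H}(\nu_I)-\epsilon/m)$. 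Since each $s\in S$ contributes exactly one base point to each of $m$ distinct paths, $\sum_P|S'_P|=m|S|$, and summing gives
\[ \mathrm{H}\bigl(\pi_{\sigma_n^F(S)\ast}\mu_n\bigr)\ge m|S|\Bigl(\mathrm{H}(\nu_I)-\tfrac{\epsilon}{m}\Bigr)=|S|\bigl(\mathrm{H}(\mu_F)-\epsilon\bigr). \]
Note that the bound is exact rather than asymptotic: all genericity and boundary losses have been absorbed into the definition of $W_n$, which is precisely why I build the boundary condition into $W_n$ rather than tracking an additive error. This verifies Definition~\ref{def1}.
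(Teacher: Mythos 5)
Your proposal is correct and follows essentially the same route as the paper's own proof: decompose $F=\bigcup_{k=1}^m h^I t_k$, reuse the generic set $W_n$ from Lemma~\ref{lem2.1}, exploit independence of $\mu_n$ across paths together with uniform mixing of $\nu$ within each path, and count $\sum_P|S_P|=m|S|$ with $\mathrm{H}(\mu_F)=m\,\mathrm{H}(\nu_I)$. The only differences are cosmetic: you invoke uniform mixing through the $I$-local observable $\pi_I$ with the standard metric on $\mathbb{Z}$ and make the choice $r=2L+r'\rho(1_G,h)$ explicit via the triangle inequality (and add a redundant boundary condition that the paper's Condition (iii) already enforces), whereas the paper phrases the same estimate via unions of $r_0$-separated intervals and the restriction of $\rho$ to $H$.
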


\begin{proof}[Proof of Lemma \ref{lem2.2}] Let $F \subseteq G$ be finite and let $\epsilon > 0$. Again decompose $F = \bigcup_{k=1}^m h^I t_k$ for some interval $I \subseteq \mathbb{Z}$ and elements $t_k \in T$. Note that the restriction of the metric $\rho$ to $H$ is a proper right invariant metric on $H \cong \mathbb{Z}$, even though it might be different from the usual metric on $\mathbb{Z}$. Thus since $\nu$ is uniformly mixing we can find some $r_0 < \infty$ such that if $(I_j)_{j=1}^q$ is a family of intervals in $\mathbb{Z}$ which are each of length $|I|$ and are pairwise at distance at least $r_0$ then writing $K = \bigcup_{j=1}^q I_j$ we have \begin{equation} \label{eq4.0} \mathrm{H}(\nu_K) \geq q \cdot \left(\mathrm{H}(\nu_I) - \frac{\epsilon}{m} \right).  \end{equation} Let $r < \infty$ be large enough that for all $g,g' \in G$ if $\rho(g,g') \geq r$ then $\rho(fg,f'g') \geq r_0$ for all $f,f' \in F$. Such a choice of $r$ is possible since by right-invariance of $\rho$ we have $\rho(fg,g) = \rho(f,1_G)$ and $\rho(f'g',g') = \rho(f',1_G)$. Let $W_n$ be as in the proof of Lemma \ref{lem2.1} and recall that $|W_n| = (1-o(1))|V_n|$. Let $S \subseteq W_n$ be $r$-separated according to $\rho_n$.\\
\\
Fix a path $P \in \mathcal{P}^{l_n}_n$ and let $S_P$ be the set of points $v \in S$ such that $\sigma^{t_{k(v)}}_n \cdot v \in P$ for some $k(v) \in \{1,\ldots,m\}$. Since $S \subseteq W_n$, Condition $(\mathrm{iii})$ from the previous proof implies that \[ \sigma^F_n(S) \cap P = \bigcup_{v \in S_P} \{ (\sigma^h_n)^i \sigma^{t_{k(v)}}_n \cdot v: i \in I\}. \] Each of the sets in the latter union is an interval of length $|I|$ in $P$ and by our choice of $r$ these are pairwise at distance $r_0$ in $\rho_n$ restricted to $P$. Since the marginal of $\mu_n$ on $P$ is equal to $\nu_{n_l}$, (\ref{eq4.0}) implies that \[ \mathrm{H}(\pi_{(\sigma^F_n(S) \cap P)*} \mu_n) \geq |S_P| \cdot \left( \mathrm{H}(\nu_I) - \frac{\epsilon}{m} \right). \] Since the marginals of $\mu_n$ on distinct paths are independent, this implies that \begin{equation} \label{eq4.3} \mathrm{H}(\pi_{\sigma^F_n(S)*} \mu_n) \geq \left( \sum_{P \in \mathcal{P}^{l_n}_n} |S_P| \right) \cdot \left(\mathrm{H}(\nu_I) - \frac{\epsilon}{m} \right).  \end{equation} By Condition $(\mathrm{iii})$ in the definition of $W_n$, each $v \in S$ appears in $S_P$ for exactly $m$ paths $P$. Therefore \begin{equation} \label{eq4.2} \sum_{P \in \mathcal{P}^{l_n}_n} |S_P| = m \cdot |S|.  \end{equation} Now $\mathrm{H}(\mu_F) = m \cdot \mathrm{H}(\nu_I)$ so from (\ref{eq4.3}) and (\ref{eq4.2}) we have \[ \mathrm{H}(\pi_{\sigma^F_n(S)*} \mu_n) \geq |S| \cdot (\mathrm{H}(\mu_F) - \epsilon) \] as required. \end{proof}

\begin{proof}[Proof of Theorem \ref{thm2}] Theorem \ref{thm2} now follows from Theorem \ref{thm1} and Lemmas \ref{lem2.1} and \ref{lem2.2}. \end{proof}

\section{Proof of Theorem \ref{thm1.0}}

\begin{proof}[Proof of Theorem \ref{thm1.0}] This part of the argument is essentially the same as the corresponding part of \cite{DGRS08}. Consider the family of uniformly mixing $\mathbb{Z}$-processes $\{(4^\mathbb{Z},\nu_\omega): \omega \in 2^\mathbb{N} \}$ constructed in Section $6$ of \cite{DGRS08}. Fix an isomorphic copy $H$ of $\mathbb{Z}$ in $G$ and let $\mu_\omega = \mathrm{CInd}_H^G(\nu_\omega)$. By Theorems \ref{thm1} and \ref{thm2} the process $(4^G,\mu_\omega)$ has completely positive model-measure sofic entropy. Note that the restriction of the $G$-action to $H$ is a permuted power of the original $\mathbb{Z}$-process in the sense of Definition $6.5$ from \cite{DGRS08}. Thus by Proposition $6.6$ in that reference, the processes $ \{ (4^G,\mu_\omega):\omega \in 2^\mathbb{N}  \}$ are pairwise nonisomorphic.\\
\\
Suppose toward a contradiction that for some $\omega$, $(4^G,\mu_\omega)$ is a factor of a Bernoulli shift $(Z^G,\eta^G)$ over some standard probability space $(Z,\eta)$. Let $\psi: Z^G \to 4^G$ be an equivariant measurable map with $\psi_* \eta^G = \mu_\omega$. Note that the restricted right-shift action $H \actson (Z^G,\eta^G)$ is still isomorphic to a Bernoulli shift and $\psi$ is still a factor map from this process to the restricted action $H \actson (4^G,\mu_\omega)$. Thus the latter $\mathbb{Z}$-process is isomorphic to a Bernoulli shift and so is its factor $(4^\mathbb{Z},\nu_\omega)$. This contradicts Corollary $6.4$ in \cite{DGRS08}. \end{proof}

\bibliographystyle{plain}
\bibliography{bibliography}

Courant Institute of Mathematical Sciences\\
New York University\\
New York NY, 10012\\
\texttt{tim@cims.nyu.edu}\\
\\
Department of Mathematics\\
California Institute of Technology\\
Pasadena CA, 91125\\
\texttt{pjburton@caltech.edu}

\end{document}